\newtheorem{defin}{Definition}
\newtheorem{lem}{Lemma}
\newtheorem{thm}{Theorem}
\newtheorem{cor}{Corollary}
\theoremstyle{definition}
\newcommand{\Nat}{{\mathbb N}}
\newcommand{\psn}[1]{\mathrm{psn}{(#1)}}
\newcommand{\vei}{\varepsilon_{\omega}}
\newcommand{\ve}{\varepsilon}
\newcommand{\LD}{\mathrm{LD}}
\newcommand{\ACA}{\mathrm{ACA}}
\newcommand{\GO}{\mathrm{GO}}
\newcommand{\RT}{\mathrm{RT}}
\newcommand{\KS}{\mathrm{KS}}
\newcommand{\tow}{\mathrm{tow}}
\title{Partitioning $\alpha$-large sets for $\alpha<\vei$}
\author{Michiel De Smet\footnote{Aspirant Fonds Wetenschappelijk Onderzoek (FWO) - Flanders} \  and Andreas Weiermann\\
Ghent University \\
Building S22 \\
Krijgslaan 281\\
B 9000 Gent\\
mmdesmet@cage.ugent.be\\ 
weierman@cage.ugent.be} 
\date{\today}
\begin{document}
\maketitle
\begin{abstract}
We generalise the results by Bigorajska and Kotlarski about partitioning $\alpha$-large sets, by extending the domain up to ordinals below $\ve_{\omega}$. These results will be very useful to give a miniaturisation of the infinite Ramsey Theorem.
 \end{abstract}
\textbf{Keywords} $\alpha$-largeness, Ramsey theory.

\section{Introduction}
McAloon, in 1985, writes down in \cite{McAloon1985}:
\begin{center}
\emph{It would be interesting to develop proofs of these results with the\\  ``direct'' method of $\alpha$-large sets of Ketonen-Solovay \cite{Ketonen1981}.}
\end{center}
The results he speaks about concern Paris-Harrington incompleteness using finite Ramsey theorems. In that paper he gives a first order axiomatization of the first order consequences of $\ACA_{0}+\RT$, where $\RT$ stands for the infinite version of Ramsey's theorem.
\par
Ketonen and Solovay used the $\alpha$-largeness in their paper on rapidly growing Ramsey functions (\cite{Ketonen1981}), in which they extend the famous result of Paris and Harrington.  They established sharp upper and lower bounds on the Ramsey function by purely combinatorial means.
Bigorajska and Kotlarski generalised their ideas to ordinals below $\ve_{0}$ and obtained several results on partitioning $\alpha$-large sets (see \cite{Bigorajska1999, Bigorajska2002, Bigorajska2006}). This paper is dedicated to generalise those latter results in order to allow ordinals  up to $\ve_{\omega}$. 
\par
After giving definitions and preliminary results, we focus on the estimation lemma. This lemma will be crucial to partition $\alpha$-large sets in the final section. In a following paper we show how these results can be used to prove a miniaturisation of the infinite Ramsey Theorem.

\section{Definitions and preliminary results}
As mentioned in the introduction, our results are generalisations of the ones of Bigorajska and Kotlarski. We assume that the reader has a copy of their papers (\cite{Bigorajska1999,Bigorajska2006}) in hand. Moreover, to avoid repetition as much as possible, we define only those notions which are not yet defined in \cite{Bigorajska1999,Bigorajska2006}. Towards the end of this section proofs often are very similar to the ones of Bigorajska and Kotlarski, so we refer to their paper if possible.

Henceforth $\ve_{-1}$ will sometimes be used to denote $\omega$, for the sake of generality of notation. We will only consider ordinals below $\vei$ and use Greek letters to write them down. Letters of the Latin alphabet will denote natural numbers. Unless clearly stated otherwise, these agreements hold for the rest of the paper. 
\par
We first need to generalise definitions given in  \cite{Bigorajska2006}. We introduce a standard notation for each ordinal and define the pseudonorm of an ordinal $\alpha$ as the maximum of the greatest natural number occurring in this notation and its height $h(\alpha)$. 
\begin{defin} Let $\alpha<\vei$.
\begin{enumerate}
\item $l(\alpha):=\min \{n \in \Nat : \alpha < \ve_{n} \}$;
\item $h(\alpha):= \min \{n \in \Nat : \alpha < \tow_{n}( \ve_{l(\alpha)-1} )\}$;
\item We say $\alpha$ is written in normal form to the base $\ve_{m}$ if 
$$\alpha = \ve_{m}^{\alpha_{0}}\cdot\xi_{0}+\ve_{m}^{\alpha_{1}}\cdot\xi_{1}+\ldots+\ve_{m}^{\alpha_{s}}\cdot\xi_{s},$$
for some $\alpha_{1}>\alpha_{1}>\ldots>\alpha_{s}$ and $0<\xi_{i}<\ve_{m}$, for $0\leq i \leq s$. If $m=l(\alpha)-1$, then we say $\alpha$ is written in normal form and write down $\alpha =_{NF} \ve_{m}^{\alpha_{0}}\cdot\xi_{0}+\ve_{m}^{\alpha_{1}}\cdot\xi_{1}+\ldots+\ve_{m}^{\alpha_{s}}\cdot\xi_{s}$.
\item If $\alpha$ is written in normal form, define
$$\psn{\alpha}:=
\begin{cases}
\max \{ 
h(\alpha),
\psn{\alpha_{0}},\ldots,\psn{\alpha_{s}},\psn{\xi_{0}},\ldots,\psn{\xi_{s}}\} \\
\hspace{7.2cm}\text{ if } \alpha\geq \omega\\
\alpha &\\
\hspace{7.2cm} \text{ if } \alpha< \omega\\
\end{cases}.$$
\end{enumerate} 
\end{defin}
If $\alpha$ is (written) in normal form to the base $\ve_{m}$, i.e. $\alpha = \ve_{m}^{\alpha_{0}}\cdot\xi_{0}+\ldots+\ve_{m}^{\alpha_{s}}\cdot\xi_{s}$, we sometimes use the notion \emph{short normal form} to speak about $\alpha = \ve_{m}^{\alpha_{0}}\cdot\xi_{0}+\psi,$ or about  $\alpha = \psi+\ve_{m}^{\alpha_{s}}\cdot\xi_{s},$ depending on the situation.
\par
The notions $\beta \gg \alpha$ and $\beta \ggg \alpha$, are defined as in the literature, but we use $\beta$ in normal form and $\alpha$ in normal form to the base $\ve_{\l(\beta)-1}$, instead of the Cantor normal form for both ordinals.
\begin{lem} \label{lem-bases}
Let $\alpha<\vei$ and $m\geq l(\alpha)-1$ Let  $\alpha=\ve_{m}^{\alpha_{0}}\cdot\xi_{0}+\ldots+\ve_{m}^{\alpha_{s}}\cdot\xi_{s}$ be written in normal form to the base $\ve_{m}$.  
Then 
$$\psn{\alpha}\geq\max\{\psn{\alpha_{0}},\ldots, \psn{\alpha_{s}}, \psn{\xi_{0}},\ldots,\psn{\xi_{s}}\}.$$
\end{lem}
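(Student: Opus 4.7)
The plan is to separate into two cases according to whether $m = l(\alpha)-1$ (the genuine normal form) or $m > l(\alpha)-1$ (a base strictly larger than required). The first case should fall out immediately from the recursive clause defining $\psn{\alpha}$, and the second case should collapse the $\ve_m$-representation to a trivial one, making the inequality vacuous.

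For the case $m = l(\alpha)-1$, the given expansion coincides with the $=_{NF}$ expansion appearing in clause (4) of Definition 1. Assuming $\alpha \ge \omega$ (the case $\alpha < \omega$ being degenerate, with $s=0$, $\alpha_0 = 0$, $\xi_0 = \alpha$), reading off the definition gives
\[
\psn{\alpha} = \max\{h(\alpha),\, \psn{\alpha_0}, \ldots, \psn{\alpha_s},\, \psn{\xi_0}, \ldots, \psn{\xi_s}\},
\]
from which the claimed bound is immediate (the $h(\alpha)$ contribution only strengthens the maximum).

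For $m > l(\alpha)-1$, so $m \ge l(\alpha)$, the plan is to show the representation has the form $\alpha = \ve_m^0\cdot\alpha$ with $s=0$. The key observation is that $\alpha < \ve_{l(\alpha)} \le \ve_m$. Thus if we had $\alpha_0 > 0$, then $\ve_m^{\alpha_0} \ge \ve_m > \alpha$, forcing $\ve_m^{\alpha_0}\cdot \xi_0 > \alpha$, a contradiction. Hence $\alpha_0 = 0$, and the strict decrease of the exponents then forces $s=0$, $\xi_0 = \alpha$. The right-hand side of the inequality becomes $\max\{\psn{0}, \psn{\alpha}\} = \psn{\alpha}$, and the conclusion is trivial.

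I do not anticipate a genuine obstacle: the entire content is the observation that enlarging the base past $\ve_{l(\alpha)-1}$ cannot manufacture larger "digits" $\alpha_i$ or $\xi_i$, because the representation is forced to degenerate. The only point requiring a moment of care is the uniqueness/shape argument ruling out nonzero $\alpha_0$ when $\alpha < \ve_m$, which is immediate from the constraint $\xi_i < \ve_m$ together with the strict decrease of exponents.
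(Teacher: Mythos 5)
Your proof is correct and follows the same two-case split as the paper's own argument (the genuine normal form case, which reads off from the definition, and the degenerate $m > l(\alpha)-1$ case, which collapses to $\alpha = \ve_m^0\cdot\alpha$). You simply spell out in more detail the step the paper dismisses as trivial, namely why $\alpha_0 = 0$ and $s=0$ are forced when the base exceeds $\ve_{l(\alpha)-1}$.
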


\begin{proof}
We consider two cases: $m=l(\alpha)-1$ and $m>l(\alpha)-1$. 
\par 
\textsc{Case 1}.  $m=l(\alpha)-1$. Then $\alpha=_{NF}\ve_{m}^{\alpha_{0}}\cdot\xi_{0}+\ldots+\ve_{m}^{\alpha_{s}}\cdot\xi_{s}$ and the statement follows by the definition of $\psn{\alpha}$.
\par
\textsc{Case 2}.  $m>l(\alpha)-1$. Then  $\alpha=\ve_{m}^{0}\cdot\alpha$ and the statement becomes trivial. 
\end{proof}

For each limit $\lambda$ we define a sequence $\lambda[n]$ of ordinals converging to $\lambda$ from below. In the next definition we only cover those cases not dealt with in \cite{Bigorajska2006}.
\begin{defin}
Let $\lambda$ be a limit ordinal below $\vei$, written in normal form, and $n$ be a natural number.
\begin{enumerate}
\item If $\lambda=\ve_{m},$ with $m\geq0$, then $\lambda[n]=\tow_{n}(\ve_{m-1})$;
\item If $\lambda=\ve_{m}^{\alpha+1},$ with $m\geq-1$, then $\lambda[n]=\ve_{m}^{\alpha}\cdot \ve_{m}[n]$;
\item If $\lambda=\ve_{m}^{\psi},$ with $m\geq-1$ and $\psi$ a limit, then $\lambda[n]=\ve_{m}^{\psi[n]}$;
\item If $\lambda=\ve_{m}^{\psi}\cdot (\alpha+1),$ with $m\geq0$, then $\lambda[n]=\ve_{m}^{\psi}\cdot \alpha+\ve_{m}^{\psi}[n]$;
\item If $\lambda=\ve_{m}^{\psi}\cdot \xi,$ with $m\geq0$ and $\xi$ limit, then $\lambda[n]=\ve_{m}^{\psi}\cdot \xi[n]$;
\item If $\lambda =\ve_{m}^{\lambda_{0}}\cdot\xi_{0}+\ldots+\ve_{m}^{\lambda_{s}}\cdot\xi_{s}$, with $m\geq0$, then $\lambda[n]=\ve_{m}^{\lambda_{0}}\cdot\xi_{0}+\ldots+(\ve_{m}^{\lambda_{s}}\cdot\xi_{s})[n]$.
\end{enumerate}
We shall call the sequence $\lambda[n]$ the fundamental sequence for $\lambda$.
\end{defin}

As we have extended our notion of fundamental sequence, now we can state the following lemma.

\begin{lem} \label{lem-fundpsnfull}For every limit $\beta<\vei$ we have:
$$(\forall \alpha < \beta)( \forall n>1)(\psn{\alpha}<n \Rightarrow \alpha < \beta[n]).$$
\end{lem}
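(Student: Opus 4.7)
I would proceed by transfinite induction on the limit ordinal $\beta<\vei$, splitting into the six clauses of the definition of the fundamental sequence. Fix $\alpha<\beta$ with $\psn{\alpha}<n$. In each clause I first write $\alpha$ in normal form to the same base $\ve_m$ that appears in the decomposition of $\beta$, extract the component of $\alpha$ that lies ``at the level'' on which $\beta[n]$ is reduced, and then combine the inductive hypothesis with Lemma~\ref{lem-bases} to conclude.

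The only genuinely new case, and the one I expect to be the main obstacle, is clause 1: $\beta=\ve_m$ with $\beta[n]=\tow_n(\ve_{m-1})$. Here $\psn{\alpha}$ does not appear explicitly on the right hand side, so I must fall back on the quantities $l$ and $h$ that enter the definition of the pseudonorm. Since $\alpha<\ve_m$ one has $l(\alpha)\leq m$, hence $\ve_{l(\alpha)-1}\leq\ve_{m-1}$; by definition of $h$, $\alpha<\tow_{h(\alpha)}(\ve_{l(\alpha)-1})$; and by definition of $\psn{\alpha}$, $h(\alpha)\leq\psn{\alpha}<n$. Monotonicity of $\tow$ in both arguments (together with the hypothesis $n>1$) then gives $\alpha<\tow_n(\ve_{m-1})=\beta[n]$, finishing this case.

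The remaining clauses follow a uniform template. For clause 3, $\beta=\ve_m^{\psi}$ with $\psi$ limit, I write $\alpha$ in normal form to the base $\ve_m$, say $\alpha=\ve_m^{\alpha_0}\cdot\xi_0+\ldots$, so $\alpha_0<\psi$; Lemma~\ref{lem-bases} yields $\psn{\alpha_0}\leq\psn{\alpha}<n$, the inductive hypothesis applied to the strictly smaller limit $\psi$ gives $\alpha_0<\psi[n]$, and therefore $\alpha<\ve_m^{\alpha_0+1}\leq\ve_m^{\psi[n]}=\beta[n]$. Clauses 2, 4, 5 and 6 run exactly analogously: I isolate the ``last piece'' of $\alpha$ relative to the decomposition of $\beta$ (the leading coefficient $\xi_0<\ve_m$ in clause 2, the tail below $\ve_m^{\psi}$ in clause 4, the coefficient below $\xi$ in clause 5, the tail below the final monomial $\ve_m^{\lambda_s}\cdot\xi_s$ in clause 6), use Lemma~\ref{lem-bases} to bound its pseudonorm by $\psn{\alpha}<n$, and invoke the inductive hypothesis on the corresponding strictly smaller limit $\ve_m$, $\ve_m^{\psi}$, $\xi$ or $\ve_m^{\lambda_s}\cdot\xi_s$ to push it below the relevant term of $\beta[n]$. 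In every subcase the limit to which the induction hypothesis is applied is genuinely less than $\beta$, so the induction is well-founded.
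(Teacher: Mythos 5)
Your plan is correct and is essentially the paper's proof: both proceed by transfinite induction on $\beta$, write $\alpha$ and $\beta$ in (short) normal form to the base $\ve_m$, and use Lemma~\ref{lem-bases} to push the pseudonorm bound onto exponents and coefficients before invoking the induction hypothesis. Your organization by fundamental-sequence clause is a repackaging of the paper's case split on how the leading terms of $\alpha$ and $\beta$ compare, and the $h$-argument you spell out for the $\beta=\ve_m$ case is precisely the unstated step behind the paper's inference $\alpha_1<\tow_n(\ve_{m-1})$ in its case $2(a)(ii)(A)$.
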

\begin{proof}  By induction on  $\beta$. The case $\beta=0$ is trivial. Assume the statement is proved for $\alpha<\beta$. Let $\beta=\varepsilon_{m}^{\beta_{0}}\cdot\beta_{1}+\beta_{2}$  (in short normal form) and $\alpha=\varepsilon_{m}^{\alpha_{0}}\cdot\alpha_{1}+\alpha_{2}$ (in short normal form to the base $\ve_{m}$). The proof reduces to the following case study.
\begin{enumerate}
\item $\beta_{2}\neq 0:$
\begin{enumerate}
\item $\beta_{0}>\alpha_{0}:$ $\alpha< \ve_m^{\beta_{0}}\leq \ve_m^{\beta_{0}}\cdot \beta_{1} + \beta_{2}[n]=\beta[n]$.
\item $\beta_{0}=\alpha_{0}:$
\begin{enumerate}
\item $\beta_{1}>\alpha_{1}:$ $\alpha< \ve_m^{\beta_{0}}\cdot \beta_{1} \leq \ve_m^{\beta_{0}}\cdot \beta_{1} + \beta_{2}[n]=\beta[n]$.
\item $\beta_{1}=\alpha_{1}:$ We must have $\alpha_{2}<\beta_{2}$. Since $\beta_{2}$ is a limit ordinal, we can apply the induction hypothesis to obtain $\alpha_{2}<\beta_{2}[n]$ and so 
$$\alpha = \ve_m^{\alpha_{0}}\cdot\alpha_{1}+\alpha_{2}= \ve_m^{\beta_{0}}\cdot\beta_{1}+\alpha_{2} <  \ve_m^{\beta_{0}}\cdot\beta_{1}+\beta_{2}[n]=\beta[n].$$
\end{enumerate}
\end{enumerate}
\item $\beta_{2}=0:$
\begin{enumerate}
\item $\beta_{0}>\alpha_{0}:$
\begin{enumerate}
\item $\beta_{1}>1:$ Then $\beta_{1}[n]\geq1$ and we have $\alpha<\ve_m^{\beta_{0}}\leq \ve_m^{\beta_{0}}\cdot \beta_{1}[n]=\beta[n]$.
\item $\beta_{1}=1:$ 
\begin{enumerate}
\item $\beta_{0}=\beta_{0}'+1:$ Then $\alpha_{0}\leq \beta_{0}'$. Since $\alpha_{1}<\ve_m$ and $\psn{\alpha_{1}}\leq \psn{\alpha} < n$, we have $\alpha_{1}<\tow_{n}(\ve_{m-1})$. All together this yields $\alpha=\ve_m^{\alpha_{0}}\cdot\alpha_{1}+\alpha_{2}<\ve_m^{\beta_{0}'}\cdot \tow_{n}(\ve_{m-1})=\beta[n]$.
\item $\beta_{0}$ limit : Since $\alpha_{0}<\beta_{0}$ and $\psn{\alpha_{0}}\leq \psn{\alpha} < n$, the induction hypothesis yields $\alpha_{0}<\beta_{0}[n]$, and thus $\alpha=\ve_m^{\alpha_{0}}\cdot\alpha_{1}+\alpha_{2}<\ve_m^{\beta_{0}[n]}=\beta[n]$.

\end{enumerate}
\end{enumerate}
\item $\beta_{0}=\alpha_{0}:$ We must have $\beta_{1}>\alpha_{1}$.
\begin{enumerate}
\item $\beta_{1}=\beta_{1}'+1:$ Then $\beta_{1}'\geq \alpha_{1}$.
\begin{enumerate}
\item $\beta_{0}=\beta_{0}'+1: $ Since $\alpha_{2}< \ve_m^{\beta_{0}}$ and $\psn{\alpha_{2}}\leq \psn{\alpha} < n$, we have $\alpha_{2}<\ve_m^{\beta_{0}'}\cdot \tow_{n}(\ve_{m-1})$. So, 
\begin{align*}
\alpha&=\ve_m^{\alpha_{0}}\cdot\alpha_{1}+\alpha_{2}\\
&=\ve_m^{\beta_{0}}\cdot\alpha_{1}+\alpha_{2}\\
& < \ve_m^{\beta_{0}}\cdot\beta_{1}'+\ve_m^{\beta_{0}'}\cdot\tow_{n}(\ve_{m-1})\\
&=\beta[n].
\end{align*}

\item $\beta_{0}$ limit : Since $\alpha_{2}<\ve_m^{\alpha_{0}}=\ve_m^{\beta_{0}}$, we can write $\alpha_{2}=\ve_m^{\xi_{0}}\cdot\xi_{1}+\xi_{2}$, with $\xi_{0}<\beta_{0}$. If we combine this with $\psn{\xi_{0}} \leq \psn{\alpha_{2}}< n$, then the induction hypothesis implies $\xi_{0}<\beta_{0}[n]$, and so $\alpha_{2}<\ve_m^{\beta_{0}[n]}$. Finally we obtain
$$\alpha=\ve_m^{\alpha_{0}}\cdot\alpha_{1}+\alpha_{2}=\ve_m^{\beta_{0}}\cdot\alpha_{1}+\alpha_{2}< \ve_m^{\beta_{0}}\cdot\beta_{1}'+\ve_m^{\beta_{0}[n]}=\beta[n].$$
\end{enumerate}
\item $\beta_{1}$ limit : Similarly as above the induction hypothesis implies $\beta_{1}[n]>\alpha_{1}$ and so $\alpha=\ve_m^{\alpha_{0}}\cdot\alpha_{1}+\alpha_{2}<\ve_m^{\beta_{0}}\cdot\beta_{1}[n]=\beta[n]$.
\end{enumerate}
\end{enumerate}
\end{enumerate}
\end{proof}

\begin{defin}
For $\alpha, \beta < \vei$ we write $\beta \Rightarrow_{n} \alpha$ if there exists a finite sequence $\alpha_{0},\ldots , \alpha_{k}$ of ordinals such that $\alpha_{0} = \beta , \alpha_{k} = \alpha$ and $\alpha_{m+1}=\alpha_{m}[n]$ for every $m < k$.
\end{defin}

\begin{lem} \label{lem-fundpsnseq}For every $\beta<\vei$ we have:
$$(\forall \alpha < \beta)( \forall n>1)((\psn{\alpha}<n) \Rightarrow (\beta\Rightarrow_{n}\alpha)).$$
\end{lem}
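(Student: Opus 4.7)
The plan is to induct on $\beta$, using Lemma~\ref{lem-fundpsnfull} as the engine: that lemma says a single application of $[n]$ to a limit $\beta$ above $\alpha$ still leaves us above $\alpha$, and the present statement is its natural iteration. So I expect the proof to amount to prepending one $[n]$-step to a reduction obtained from the induction hypothesis.

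Before starting, I would adopt the standard extension of the fundamental-sequence operation to successors and zero, namely $0[n]:=0$ and $(\gamma+1)[n]:=\gamma$, so that $\Rightarrow_{n}$ becomes a well-defined relation on every ordinal below $\vei$. The base case $\beta=0$ is then vacuous, since there is no $\alpha<\beta$. For the inductive step, fix $\alpha<\beta$ with $\psn{\alpha}<n$. If $\beta=\gamma+1$, then $\beta[n]=\gamma\geq\alpha$: either $\alpha=\gamma$ and the one-step sequence $\beta,\gamma$ already witnesses $\beta\Rightarrow_{n}\alpha$, or $\alpha<\gamma$, in which case the induction hypothesis applied at $\gamma<\beta$ yields $\gamma\Rightarrow_{n}\alpha$, and I prepend the step $\beta\to\gamma$. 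If $\beta$ is a limit, Lemma~\ref{lem-fundpsnfull} gives $\alpha<\beta[n]$; if $\alpha=\beta[n]$ a single step suffices, and otherwise the induction hypothesis at $\beta[n]<\beta$ (applied to the same $\alpha$, so the hypothesis $\psn{\alpha}<n$ is preserved) produces $\beta[n]\Rightarrow_{n}\alpha$, to which I prepend $\beta\to\beta[n]$.

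I do not expect any genuine obstacle: all of the combinatorial content has already been absorbed into Lemma~\ref{lem-fundpsnfull}, and the present lemma is essentially its reflexive-transitive closure. The only point requiring a moment's care is to check that the induction hypothesis can legitimately be invoked at $\gamma$ or $\beta[n]$, which is immediate from $\gamma<\beta$ and $\beta[n]<\beta$ respectively under the extension convention above.
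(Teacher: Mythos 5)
Your proof is correct and follows essentially the same route as the paper: transfinite induction on $\beta$, with the trivial base case, the successor case handled by $\beta[n]=\gamma$ plus the induction hypothesis, and the limit case reduced via Lemma~\ref{lem-fundpsnfull} to the induction hypothesis at $\beta[n]$. Your remark that one must adopt the convention $(\gamma+1)[n]=\gamma$ (and $0[n]=0$) is a useful explicit statement of something the paper leaves implicit.
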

\begin{proof} The proof goes by transfinite induction on $\beta$.
\par
If $\beta$ equals zero, then the statement is trivial.
\par
Let $\beta=\gamma+1$. If $\alpha=\gamma$, then $\alpha=\beta[n]$ and so $\beta\Rightarrow_{n}\alpha$. If $\alpha < \gamma$, then apply the induction hypothesis to obtain $\gamma \Rightarrow_{n} \alpha$. Now $\beta[n]=\gamma$ yields $\beta\Rightarrow_{n}\alpha$.
\par
Let $\beta$ be a limit ordinal. Due to the previous lemma we know $\alpha < \beta[n]$. Apply the induction hypothesis to obtain $\beta[n] \Rightarrow_{n} \alpha$, and so $\beta\Rightarrow_{n}\alpha$.
\end{proof}

\begin{lem}\label{lem-Nmk} Let $a, k, m \in \Nat$, such that $1 \leq k\leq a$. Then
$$|\{ \alpha < \tow_{k}(\ve_{m}) : \psn{\alpha}\leq a\}|=\tow_{k}\underbrace{(\tow_{a}(\ldots(\tow_{a}}_{m+1\text{ times}} (a+1))\ldots)).$$
\end{lem}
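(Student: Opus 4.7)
The plan is to prove the identity by induction on $m$, with an inner induction on $k$ for each fixed $m$. At every stage we analyse admissible ordinals $\alpha$ (those with $\psn{\alpha}\leq a$) through their normal form to the appropriate base, and count the admissible combinations of exponents and coefficients.

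For the inner base case $k=1$, the upper bound $\tow_1(\ve_m)$ lies just one $\omega$-exponentiation past $\ve_m$. Writing $\alpha$ in normal form to base $\ve_m$ (or to base $\omega$ when $m=0$), the constraint $\psn{\alpha}\leq a$ forces $h(\alpha)\leq a$, a recursive pseudonorm bound on each exponent $\alpha_i$, and a restriction of each coefficient $\xi_i$ to a finite range. Multiplying the admissible choices across the leading terms produces the right-hand side in the $k=1$ instance.

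For the inner inductive step $k\to k+1$ with $m$ fixed, we exploit $\tow_{k+1}(\ve_m)=\omega^{\tow_k(\ve_m)}$: every admissible $\alpha<\tow_{k+1}(\ve_m)$ decomposes uniquely as $\alpha=\omega^{\gamma}\cdot\xi+\rho$ with $\gamma<\tow_k(\ve_m)$, and the induction hypothesis at $k$ supplies the number of admissible leading exponents $\gamma$. Iterating the $\omega$-exponentiation corresponds exactly to replacing $\tow_k$ by $\tow_{k+1}$ on the right-hand side. For the outer step $m\to m+1$ we switch to normal form to base $\ve_m$: the coefficients $\xi_i<\ve_m$ with $\psn{\xi_i}\leq a$ are themselves counted by the outer hypothesis at level $m$, which contributes the additional application of $\tow_a$ that raises the number of nested $\tow_a$'s from $m+1$ to $m+2$.

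The main obstacle is keeping the pseudonorm bookkeeping consistent across the changes of base. Lemma \ref{lem-bases} is essential here: it guarantees that the pseudonorms of the subcomponents of $\alpha$ are controlled regardless of which base we choose to decompose $\alpha$ against, so that we can freely move between a base-$\ve_m$ decomposition (for the outer step) and a base-$\omega$ decomposition (for the inner step) without losing the inductive control. Combined with careful tracking of $h(\alpha)$, which bounds how ``high'' within a given epsilon-level the leading term may sit, this is what ensures that the nested tower on the right matches the multiplicative tally of admissible ordinals on the left.
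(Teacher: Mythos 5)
Your overall strategy (double induction on $m$ and $k\leq a$, analysing how $\psn{\alpha}\leq a$ constrains the normal form) is indeed the paper's, but the two steps you sketch both have real problems.

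The identity $\tow_{k+1}(\ve_m)=\omega^{\tow_k(\ve_m)}$ on which your inner step rests is false at $k=1$: since $\ve_m$ is an $\varepsilon$-number, $\omega^{\ve_m}=\ve_m$, so $\omega^{\tow_1(\ve_m)}=\ve_m$, while $\tow_2(\ve_m)=\ve_m^{\ve_m}$. The tower is built by $\ve_m$-exponentiation, $\tow_{k+1}(\ve_m)=\ve_m^{\tow_k(\ve_m)}$, and accordingly the inner induction must use the normal form to base $\ve_m$ rather than base $\omega$; the base-$\omega$ CNF is in any case ill-suited here because $\psn{\cdot}$ is read off the base-$\ve_{l(\alpha)-1}$ normal form, not the CNF. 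More importantly, you never obtain a recurrence: peeling off the single leading term $\omega^{\gamma}\cdot\xi$ and counting ``admissible leading exponents $\gamma$'' leaves a tail $\rho$ of the same size, so it does not reduce $k+1$ to $k$. The actual engine is the bijection that identifies an $\alpha\in N^m_{k+1}(a)$, written in base-$\ve_m$ normal form with zero coefficients permitted, with a finite-support function from the exponent set $N^m_k(a)$ to the coefficient set $N^m_1(a)$; this gives $|N^m_{k+1}(a)|=|N^m_1(a)|^{|N^m_k(a)|}$, i.e., exactly $\tow_{k+1}(T)=T^{\tow_k(T)}$ with $T=|N^m_1(a)|$. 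Your proposal contains no such step. For the outer advance, the paper's $k=1$ case is also simpler than your description: $\psn{\alpha}\leq a$ for $\alpha<\ve_{m+1}$ forces $\alpha<\tow_a(\ve_m)$, so $N^{m+1}_1(a)=N^m_a(a)$, and the level-$m$ hypothesis with $k=a$ (this is exactly where $k\leq a$ matters) finishes that case. Finally, Lemma~\ref{lem-bases} does no work in this proof; since every decomposition used is already the normal form appearing in the definition of $\psn{\cdot}$, the bookkeeping is immediate from that definition.
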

\begin{proof}
Take any $a\in \Nat$, with $1 \leq a$. The proof proceeds by double induction on $m$ and $k\leq a$. Let $N^{m}_{k}(a)=\{ \alpha < \tow_{k}(\ve_{m}) : \psn{\alpha}\leq a\}$. We want to prove that 
$$
|N^{m}_{k}(a)|=\tow_{k}\underbrace{(\tow_{a}(\ldots(\tow_{a}}_{m+1\text{ times}} (a+1))\ldots)).
$$

\begin{enumerate}
\item $m=0$
\begin{enumerate}
\item $k=1:$ We have
\begin{align*}
|N^{0}_{1}(a)|&=|\{ \alpha < \ve_{0} : \psn{\alpha}\leq a\}|\\
&=|\{ \alpha < \tow_{a}(\omega) : \psn{\alpha}\leq a\}|\\
&=\tow_{a}(a+1)\\
&=\tow_{1}(\tow_{a}(a+1)),
\end{align*}
where the last equality is a result from the proof of Lemma 3.5 in \cite{Bigorajska2006}.
\item $k\to k+1:$ Assume the statement for $k$ and let $k+1\leq a$ and $N^{0}_{k}(a)=\{\alpha_{0}, \ldots, \alpha_{r}\}$ in decreasing order. Every $\alpha \in N^{0}_{k+1}$ written in normal form to the base $\ve_{0}$ (which is slightly changed by allowing some coefficients to be zero) may be identified with the sequence of its coefficients, whose exponents are $\alpha_{0},\ldots, \alpha_{r}$. It follows that 
\begin{align*}
|N^{0}_{k+1}(a)|&=(|N^{0}_{1}(a)|)^{|N^{0}_{k}(a)|}\\
&=(\tow_{a}(a+1))^{\tow_{k}(\tow_{a}(a+1))}\\
&=\tow_{k+1}(\tow_{a}(a+1))
\end{align*}
\end{enumerate}
\item $m \to m+1:$ Assume the statement is proven for $m$.
\begin{enumerate}
\item $k=1:$ Using the induction hypothesis we obtain
\begin{align*}
|N^{m+1}_{1}(a)|&=|\{ \alpha < \ve_{m+1} : \psn{\alpha}\leq a\}|\\
&=|\{ \alpha < \tow_{a}(\ve_{m}) : \psn{\alpha}\leq a\}|\\
&=\tow_{a}\underbrace{(\tow_{a}(\ldots(\tow_{a}}_{m+1\text{ times}} (a+1))\ldots))\\
&=\underbrace{\tow_{a}(\ldots(\tow_{a}}_{m+2\text{ times}} (a+1))\ldots)\\
&=\tow_{1}(\underbrace{\tow_{a}(\ldots(\tow_{a}}_{m+2\text{ times}} (a+1))\ldots)).
\end{align*}
\item $k\to k+1:$ Reasoning in a similar way as done in \text{1. (b)} and using the induction hypothesis and \text{2. (a)}, we obtain
\begin{align*}
|N^{m+1}_{k+1}(a)|&=(|N^{m+1}_{1}(a)|)^{|N^{m+1}_{k}(a)|}\\
&=(\underbrace{\tow_{a}(\ldots(\tow_{a}}_{m+2\text{ times}} (a+1))\ldots))^{\tow_{k}(\underbrace{\tow_{a}(\ldots(\tow_{a}}_{m+2\text{ times}} (a+1))\ldots))}\\
&=\tow_{k+1}(\underbrace{\tow_{a}(\ldots(\tow_{a}}_{m+2\text{ times}} (a+1))\ldots)),
\end{align*}
which concludes the induction and therefore the proof.
\end{enumerate}
\end{enumerate}
\end{proof}
Remark that, for $k>a$, 
\begin{align*}
N^{m}_{k}(a)&=\{ \alpha < \tow_{k}(\ve_{m}) : \psn{\alpha}\leq a\}\\
&=\{ \alpha < \tow_{a}(\ve_{m}) : \psn{\alpha}\leq a\}\\
&=N^{m}_{a}(a).
\end{align*}
\par
Let $h: \Nat \to \Nat$ be an increasing function. We define the \emph{Hardy hierarchy based on $h$} as follows:
\begin{align*}
h_{0}(x)&=x\\
h_{\alpha+1}(x)&=h_{\alpha}(h(x))\\
h_{\lambda}(x)&=h_{\lambda[x]}(x),
\end{align*}
with $\lambda$ a limit ordinal. In general $h$ will be the successor function $h(x)=x+1$. However, if we consider a specific set $A \subseteq \Nat$, we work with the successor function in the sense of $A$. Thus $h$ denotes the function defined on $A$ which associates with every $a \in A$ (or $a\in A\setminus\{ \max A\}$ if $A$ is finite) the next element of $A$. In this case we  sometimes explicitly write $h^{A}_{\alpha}$.
\begin{lem} \label{lem-h>tower}
For all $m\geq 1$ and $a\geq 1$, we have for all $x>0$,
$$h_{\omega^{2}\cdot2am}(x)\geq \underbrace{\tow_{a}(\ldots(\tow_{a}}_{m \text{ times}} (x+1))\ldots).$$
\end{lem}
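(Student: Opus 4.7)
The argument proceeds by induction on $m$, with the base case $m=1$ carrying the main content. Two routine properties of the Hardy hierarchy based on the successor function are used throughout: monotonicity of $h_{\alpha}(x)$ in $x$, and the additive composition law $h_{\alpha+\beta}(x)=h_{\alpha}(h_{\beta}(x))$. The latter is proved by transfinite induction on $\beta$; for the limit step one uses $(\alpha+\lambda)[n]=\alpha+\lambda[n]$, which is an immediate consequence of clause (6) of the fundamental sequence definition. Combining these with the successor rule $h(x)=x+1$ gives $h_\omega(x)=2x$, then $h_{\omega\cdot k}(x)=2^{k}x$ by induction on $k$, and finally $h_{\omega^{2}}(x)=h_{\omega\cdot x}(x)=2^{x}\cdot x$.

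For the base case $m=1$ I would first establish the auxiliary inequality
$$h_{\omega^{2}\cdot a}(x)\geq \tow_{a}(x)\quad\text{for all }a\geq 1\text{ and }x\geq 1,$$
by induction on $a$. The case $a=1$ is the estimate $h_{\omega^{2}}(x)=2^{x}x\geq 2^{x}=\tow_{1}(x)$. For the step, the composition law and the $\omega$-calculation yield $h_{\omega^{2}\cdot(a+1)}(x)=h_{\omega^{2}\cdot a}(h_{\omega\cdot x}(x))=h_{\omega^{2}\cdot a}(2^{x}x)$, and then the inductive hypothesis together with monotonicity gives a lower bound of $\tow_{a}(2^{x}x)\geq\tow_{a}(2^{x})=\tow_{a+1}(x)$. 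The claimed $m=1$ case then follows by replacing $a$ with $2a$:
$$h_{\omega^{2}\cdot 2a}(x)\geq \tow_{2a}(x)=\tow_{a}(\tow_{a}(x))\geq \tow_{a}(x+1),$$
where the final inequality uses the trivial bound $\tow_{a}(x)\geq x+1$ (for $a,x\geq 1$) and monotonicity of $\tow_{a}$.

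For the inductive step on $m$, I would split $\omega^{2}\cdot 2a(m+1)=\omega^{2}\cdot 2am+\omega^{2}\cdot 2a$ and apply the composition law to obtain
$$h_{\omega^{2}\cdot 2a(m+1)}(x)=h_{\omega^{2}\cdot 2am}(h_{\omega^{2}\cdot 2a}(x))\geq h_{\omega^{2}\cdot 2am}(\tow_{a}(x+1))$$
by the base case and monotonicity. Applying the induction hypothesis at the point $\tow_{a}(x+1)$ then bounds this below by the $m$-fold iterate $\underbrace{\tow_{a}(\ldots(\tow_{a}}_{m\text{ times}}(\tow_{a}(x+1)+1))\ldots)$, which is in turn at least the $(m+1)$-fold iterate $\underbrace{\tow_{a}(\ldots(\tow_{a}}_{m+1\text{ times}}(x+1))\ldots)$ required on the right-hand side.

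I do not anticipate any serious obstacle; the proof is a direct computation. The only modest subtlety lies in the choice of the auxiliary lemma $h_{\omega^{2}\cdot a}(x)\geq\tow_{a}(x)$: doubling $a$ to $2a$ produces exactly the extra $\tow_{a}$ needed to absorb the internal $+1$ in the target $\tow_{a}(x+1)$, so both the base case and the inductive step reduce to monotonicity rather than to any tight quantitative comparison.
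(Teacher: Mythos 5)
Your proof is correct and takes essentially the same route as the paper: induction on $m$, splitting $\omega^{2}\cdot 2a(m+1)=\omega^{2}\cdot 2am+\omega^{2}\cdot 2a$, and using the composition law together with monotonicity for the inductive step. The only difference is the base case $m=1$: the paper simply cites Lemma~4.5 of Bigorajska--Kotlarski for $h_{\omega^{2}\cdot 2a}(x)\geq\tow_{a}(x+1)$, whereas you re-derive it from scratch via the auxiliary bound $h_{\omega^{2}\cdot a}(x)\geq\tow_{a}(x)$; your detour through $\tow_{2a}(x)=\tow_{a}(\tow_{a}(x))\geq\tow_{a}(x+1)$ is a clean and slightly more self-contained way to land exactly on the paper's base case.
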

\begin{proof} Fix $a \geq 1$.
By induction on $m$.
\par
If $m=1$, then $h_{\omega^{2}\cdot2a}(x)\geq \tow_{a} (x+1),$ by Lemma 4.5 in \cite{Bigorajska2006}.
\par
Assume the statement is proven for $m$, we prove it for $m+1$. The case $m=1$ and the induction hypothesis imply
\begin{align*}
h_{\omega^{2}\cdot2a(m+1)}(x)&\geq h_{\omega^{2}\cdot2am}(h_{\omega^{2}\cdot2a}(x))\\
&\geq h_{\omega^{2}\cdot2am}(\tow_{a}(x+1))\\
&\geq \underbrace{\tow_{a}(\ldots(\tow_{a}}_{m\text{ times}}(\tow_{a} (x+1)+1))\ldots)\\
&\geq \underbrace{\tow_{a}(\ldots(\tow_{a}}_{m+1 \text{ times}} (x+1))\ldots),
\end{align*}
which completes the proof.
\end{proof}

\begin{lem} \label{lem-h}
For every $\alpha < \vei$:
\begin{enumerate}
\item $h_{\alpha}$ is increasing;
\item For every $\beta, b$ if $\alpha \Rightarrow_{b} \beta$ then if $h_{\alpha}(b)$ exists then $h_{\beta}(b)$ exists and $h_{\alpha}(b) \geq h_{\beta}(b)$.
\end{enumerate}
\end{lem}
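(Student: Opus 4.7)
The plan is to prove both parts by a simultaneous transfinite induction on $\alpha$, since the limit step of part~1 will need to invoke part~2 at strictly smaller ordinals. Throughout I adopt the standing convention $(\gamma+1)[n]=\gamma$ and $0[n]=0$ that is already used implicitly in the proof of Lemma~\ref{lem-fundpsnseq}, so that part~2 reduces, via a subsidiary induction on the length $k$ of the $\Rightarrow_b$-chain, to the one-step comparison $h_\alpha(b)\ge h_{\alpha[b]}(b)$.

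For part~1 the cases $\alpha=0$ and $\alpha=\gamma+1$ are routine: in the latter $h_{\gamma+1}$ is the composition of the two strictly increasing maps $h$ and (inductively) $h_\gamma$. When $\alpha=\lambda$ is a limit I reduce to showing $h_\lambda(x)<h_\lambda(x+1)$ for every $x$ via the chain
\[
 h_\lambda(x)=h_{\lambda[x]}(x)<h_{\lambda[x]}(x+1)\le h_{\lambda[x+1]}(x+1)=h_\lambda(x+1),
\]
where the strict step uses the inductive hypothesis of part~1 at $\lambda[x]<\lambda$, and the weak step uses Lemma~\ref{lem-fundpsnseq} to obtain $\lambda[x+1]\Rightarrow_{x+1}\lambda[x]$, followed by the inductive hypothesis of part~2 at $\lambda[x+1]<\lambda$.

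Part~2 itself I prove by fixing $b$ and inducting on the length $k$ of the $\Rightarrow_b$-witness. The base case is trivial, and for the inductive step the inner hypothesis delivers $h_{\alpha[b]}(b)\ge h_\beta(b)$, so it suffices to verify the one-step inequality $h_\alpha(b)\ge h_{\alpha[b]}(b)$. This last splits by the form of $\alpha$: for $\alpha=\gamma+1$ it reads $h_\gamma(h(b))\ge h_\gamma(b)$ and follows from part~1 at $\gamma$ together with $h(b)\ge b$; for $\alpha$ a limit the two sides are literally equal by definition; and $\alpha=0$ is vacuous.

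The main obstacle I anticipate is justifying the call to Lemma~\ref{lem-fundpsnseq} in the limit case of part~1, which requires both $\lambda[x]<\lambda[x+1]$ and $\psn{\lambda[x]}<x+1$; the first follows from the second via Lemma~\ref{lem-fundpsnfull}, so everything rests on the pseudonorm estimate $\psn{\lambda[x]}\le x$. This norm-control bound is not stated as a lemma in the excerpt, and verifying it will mean walking through each of the six clauses in the definition of $\lambda[n]$ and inductively propagating the estimate --- a tedious but standard computation of the Bigorajska--Kotlarski type, which one ought to record as a preparatory lemma before entering the proof of Lemma~\ref{lem-h}.
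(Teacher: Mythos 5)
Your overall scheme --- simultaneous induction, with the limit step of part~1 going through the chain $h_\lambda(x)=h_{\lambda[x]}(x)<h_{\lambda[x]}(x+1)\le h_{\lambda[x+1]}(x+1)=h_\lambda(x+1)$ --- is what the paper's one-line proof intends, and reducing part~2 to the one-step inequality $h_\alpha(b)\ge h_{\alpha[b]}(b)$ is fine. The gap is exactly where you suspected one might be, but it is worse than a ``tedious but standard computation'': the preparatory bound $\psn{\lambda[x]}\le x$ you propose is \emph{false}. For $\lambda=\ve_m$ (any $m\ge 0$, with $\ve_{-1}=\omega$) we have $\lambda[x]=\tow_x(\ve_{m-1})$, and since $l(\tow_x(\ve_{m-1}))-1=m-1$, the definition of $h$ gives $h(\tow_x(\ve_{m-1}))=\min\{n:\tow_x(\ve_{m-1})<\tow_n(\ve_{m-1})\}=x+1$, hence $\psn{\lambda[x]}\ge x+1>x$. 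The same overshoot occurs whenever $\ve_m[n]=\tow_n(\ve_{m-1})$ appears as a subterm of $\lambda[n]$, which happens in clauses 1, 2 and 4 of the fundamental-sequence definition. So Lemma~\ref{lem-fundpsnseq} cannot be invoked to produce $\lambda[x+1]\Rightarrow_{x+1}\lambda[x]$, and the weak step in your chain has no support.

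The fact you actually need, namely $\lambda[n+1]\Rightarrow_{n+1}\lambda[n]$ for every limit $\lambda$ and $n\ge 1$ (a step-down, Bachmann-type property of the fundamental sequences), does appear to hold here, but it must be proved on its own by a transfinite induction through the six clauses of the definition of $\lambda[n]$; it does \emph{not} follow from any pseudonorm bound together with Lemma~\ref{lem-fundpsnseq}. For instance for $\lambda=\ve_m$ one must exhibit a genuine $[n+1]$-chain from $\ve_{m-1}^{\tow_n(\ve_{m-1})}$ down to $\ve_{m-1}^{\tow_{n-1}(\ve_{m-1})}$; along that chain the exponent passes through successor ordinals, the applicable fundamental-sequence clause switches among (3), (2) and (4), and one cannot simply lift an exponent-level $[n+1]$-chain through $\gamma\mapsto\ve_{m-1}^{\gamma}$. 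An equally workable alternative is to carry along the stronger inductive hypothesis that $h_{\lambda[x]}(y)\le h_{\lambda[x+1]}(y)$ for all $y\ge x+1$, verified casewise on the form of $\lambda$. Either way, this auxiliary step-down fact is where the real content hides, and your proposal as written does not supply it.
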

\begin{proof}
By simultaneous induction on $\alpha$.
\end{proof}

\begin{lem} \label{lem-hepsm}
For all $m\geq 0$ and $a>3$, we have
$$h_{\ve_{m}}(a)>h_{\omega^{2}\cdot2a(m+3)}(a).$$
\end{lem}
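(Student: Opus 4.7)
The plan is to induct on $m$, unfolding $h_{\ve_m}$ once at the limit and then routing through an intermediate ordinal of pseudonorm strictly below $a$. By the definition of $h$ at a limit and the fundamental sequence, $h_{\ve_m}(a) = h_{\ve_m[a]}(a) = h_{\tow_a(\ve_{m-1})}(a)$. The ordinal $\ve_{m-1} \cdot (a-1)$ (taking $\ve_{-1}=\omega$ in the base case) is in normal form to the base $\ve_{m-1}$ as $\ve_{m-1}^{1} \cdot (a-1)$, so $\psn{\ve_{m-1} \cdot (a-1)} = a-1 < a$, and it lies strictly below $\tow_a(\ve_{m-1})$. Lemma \ref{lem-fundpsnseq} therefore delivers $\tow_a(\ve_{m-1}) \Rightarrow_a \ve_{m-1} \cdot (a-1)$, and Lemma \ref{lem-h}(2) upgrades this to
\[
h_{\ve_m}(a) \geq h_{\ve_{m-1} \cdot (a-1)}(a).
\]

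Next, a Hardy composition identity $h_{\alpha + \beta}(x) = h_{\alpha}(h_{\beta}(x))$, provable by a straightforward induction on $\beta$ using $(\alpha + \beta)[n] = \alpha + \beta[n]$ for limit $\beta$, rewrites the right-hand side as the $(a-1)$-fold iterate $h_{\ve_{m-1}}^{a-1}(a)$; since $a > 3$, this is at least $h_{\ve_{m-1}}^{2}(a)$. For the inductive step ($m \geq 1$), two applications of the inductive hypothesis together with Hardy composition on the right-hand side yield
\[
h_{\ve_{m-1}}^{2}(a) > h_{\omega^2 \cdot 2a(m+2)}^{2}(a) = h_{\omega^2 \cdot 4a(m+2)}(a) > h_{\omega^2 \cdot 2a(m+3)}(a),
\]
using that $4a(m+2) > 2a(m+3)$ together with the monotonicity of $h$ in its argument (Lemma \ref{lem-h}(1)). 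For the base case $m = 0$, the same scheme with $\omega^{\omega} \cdot (a-1)$ as the intermediate reduces the task to showing $h_{\omega^{\omega}}^{2}(a) > h_{\omega^2 \cdot 6a}(a)$; since $h_{\omega^{\omega}}(a) = h_{\omega^a}(a) \geq h_{\omega^2}^{a-1}(a)$ already exceeds a tower of height roughly $a-1$, iterating once more with $h_{\omega^{\omega}}$ dwarfs the Lemma \ref{lem-h>tower} lower bound $\tow_{a}^{3}(a+1)$ relevant on the right-hand side.

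The main obstacle is the careful verification of Hardy composition in the specific settings above, together with the bookkeeping of coefficients in the inductive step: one must check that $\ve_{m-1} \cdot (a-1)$ maintains normal form at each stage of the iterated sum so that the composition identity legitimately applies (rather than being destroyed by ordinal absorption), and that the slack between $4a(m+2)$ and $2a(m+3)$ is enough for the inductive bound to propagate. The base case additionally needs an explicit lower estimate for $h_{\omega^{\omega}}^{2}(a)$ that beats a tower of height $6a$, which is where Lemma \ref{lem-h>tower} is invoked twice.
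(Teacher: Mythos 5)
Your inductive step is essentially the paper's argument in thin disguise: unwind $\ve_m$ once along the fundamental sequence, extract a double iteration of $h_{\ve_{m-1}}$ (the paper passes through $\ve_m\cdot 2$, you through $\ve_{m-1}\cdot(a-1)$ truncated to two iterations — both have pseudonorm below $a$, so the step is fine), then apply the inductive hypothesis twice and recombine the coefficients via the Hardy composition $h_{\alpha+\beta}=h_\alpha\circ h_\beta$. The arithmetic $4a(m+2)>2a(m+3)$ is the same slack the paper exploits in the form $2a(m+3)+2a(m+3)=2a(m+4)+2a(m+2)$, so up to bookkeeping that part is sound.

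The base case, however, contains a genuine logical gap. You reduce to $h_{\omega^{\omega}}^{2}(a)>h_{\omega^{2}\cdot 6a}(a)$ and then argue that the left-hand side ``dwarfs the Lemma~\ref{lem-h>tower} lower bound $\underbrace{\tow_a(\tow_a(\tow_a}_{3}(a+1)))$'' of the right-hand side. But Lemma~\ref{lem-h>tower} only provides $h_{\omega^{2}\cdot 6a}(a)\geq \tow_a(\tow_a(\tow_a(a+1)))$, a \emph{lower} bound on the right-hand side; knowing that $h_{\omega^{\omega}}^{2}(a)$ exceeds a lower bound of $h_{\omega^{2}\cdot 6a}(a)$ tells you nothing about whether it exceeds $h_{\omega^{2}\cdot 6a}(a)$ itself. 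You would need an upper bound for the right-hand side, which the paper's lemmas do not supply. The claim is true and can be salvaged — for instance, $h_{\ve_0}(a)\geq h_{\omega^3\cdot 2}(a)=h_{\omega^3}(h_{\omega^3}(a))=h_{\omega^2\cdot y}(y)$ with $y=h_{\omega^3}(a)>6a$, hence $>h_{\omega^{2}\cdot 6a}(a)$ — but as written your justification does not go through. The paper sidesteps the whole issue by routing through $\omega^{3}+\omega\cdot 3$ (pseudonorm $3<a$), computing $h_{\omega^3+\omega\cdot 3}(a)=h_{\omega^{2}\cdot 8a}(8a)$ exactly, and then splitting $h_{\omega^{2}\cdot 8a}(8a)=h_{\omega^{2}\cdot 2a}\bigl(h_{\omega^{2}\cdot 6a}(8a)\bigr)>h_{\omega^{2}\cdot 6a}(a)$, which needs no upper estimate at all.
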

\begin{proof}
By induction on $m$.
\par
If $m=0$, then 
\begin{align*}
h_{\ve_{0}}(a) &= h_{\tow_{a}(\omega)}(a) \\
&\geq h_{\omega^{3}+\omega\cdot 3}(a)\\
&= h_{\omega^{2}\cdot8a}(8a)\\
&= h_{\omega^{2}\cdot2a}( h_{\omega^{2}\cdot6a}(8a))\\
&> h_{\omega^{2}\cdot6a}(8a)\\
&\geq h_{\omega^{2}\cdot6a}(a),
\end{align*}
where the last inequality holds because $h_{\alpha}$ is an increasing function. Due to Lemma \ref{lem-fundpsnseq}, we have $\tow_{a}(\omega)\Rightarrow_{a}\omega^{3}+\omega\cdot 3$, since $\psn{\omega^{3}+\omega\cdot 3}=3<a$ and $\omega^{3}+\omega\cdot 3<\tow_{a}(\omega)$. Then the first inequality is implied by Lemma \ref{lem-h}. 
\par
Assume the statement holds for $m$, we prove it for $m+1$. We have
\begin{align*}
h_{\ve_{m+1}}(a) &= h_{\tow_{a}(\ve_{m})}(a) \\
&\geq h_{\ve_{m}\cdot 2}(a) \\
&= h_{\ve_{m}}(h_{\ve_{m}}(a)) \\
&> h_{\ve_{m}}(h_{\omega^{2}\cdot2a(m+3)}(a)) \\
&> h_{\omega^{2}\cdot2a(m+3)}(h_{\omega^{2}\cdot2a(m+3)}(a)) \\
&= h_{\omega^{2}\cdot2a(m+4)}(h_{\omega^{2}\cdot2a(m+2)}(a)) \\
&\geq h_{\omega^{2}\cdot2a(m+4)}(a).
\end{align*}
The first inequality holds because of Lemma \ref{lem-h}. The second inequality is due to the induction hypothesis. The third inequality holds because of the induction hypothesis and Lemma \ref{lem-fundpsnseq}. The last inequality is also due to Lemma \ref{lem-fundpsnseq}.
\end{proof}

\begin{lem} \label{lem-htow}
For all $m\geq 0$ and $a>3$, we have
$$h_{\ve_{m}}(a)>2\underbrace{\tow_{a}(\ldots(\tow_{a}}_{m+2\text{ times}} (a+1))\ldots).$$
\end{lem}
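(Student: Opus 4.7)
The plan is to chain the two previous lemmas and then absorb one outer $\tow_a$ to produce the factor of $2$. Explicitly, Lemma \ref{lem-hepsm} gives
$$h_{\ve_{m}}(a) > h_{\omega^{2}\cdot 2a(m+3)}(a),$$
and Lemma \ref{lem-h>tower}, applied with $m$ replaced by $m+3$, gives
$$h_{\omega^{2}\cdot 2a(m+3)}(a) \geq \underbrace{\tow_{a}(\ldots(\tow_{a}}_{m+3\text{ times}}(a+1))\ldots).$$
So after one line we are already one tower step above the target.

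To bring the $m+3$ down to $m+2$ and pick up the factor $2$, I would use the trivial estimate $\tow_{a}(y)\geq 2^{y}\geq 2y$, valid for every $y\geq 1$ and every $a\geq 1$ (since $\tow_{1}(y)=2^{y}$ and $2^{y}\geq 2y$ whenever $y\geq 1$). Setting
$$y := \underbrace{\tow_{a}(\ldots(\tow_{a}}_{m+2\text{ times}}(a+1))\ldots),$$
which is clearly $\geq 1$ because $a>3$ forces $a+1\geq 5$, yields
$$\underbrace{\tow_{a}(\ldots(\tow_{a}}_{m+3\text{ times}}(a+1))\ldots) = \tow_{a}(y) \geq 2y = 2\underbrace{\tow_{a}(\ldots(\tow_{a}}_{m+2\text{ times}}(a+1))\ldots).$$
Concatenating the three estimates delivers the claim.

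There is really no obstacle here; the work has already been done in Lemmas \ref{lem-h>tower} and \ref{lem-hepsm}. The only point worth double-checking is the slack that lets us peel off one $\tow_{a}$ while keeping the factor $2$, and the bound $\tow_{a}(y)\geq 2y$ for $y\geq 1$ handles this cleanly. No induction on $m$ is needed in this final step, since Lemma \ref{lem-hepsm} has already carried out the recursion on $m$ internally.
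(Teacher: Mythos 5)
Your argument is correct and follows the paper's own route exactly: combine Lemma~\ref{lem-hepsm} with Lemma~\ref{lem-h>tower} (applied with $m+3$ in place of $m$), then absorb one outer $\tow_{a}$ to produce the factor $2$. One remark on your justification of that final absorption: in the Bigorajska--Kotlarski convention used here one has $\tow_{0}(y)=1$, $\tow_{1}(y)=y$, $\tow_{2}(y)=y^{y}$ (visible in the proof of Lemma~\ref{lem-Nmk}, where $|N^{0}_{1}(a)|=\tow_{a}(a+1)$, e.g.\ $|N^{0}_{1}(2)|=27=3^{3}$), so $\tow_{1}(y)\neq 2^{y}$ and in fact $\tow_{a}(y)\geq 2y$ fails at $a=1$. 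The estimate you actually want is $\tow_{a}(y)\geq\tow_{2}(y)=y^{y}\geq 2y$, valid for $a\geq 2$ and $y\geq 2$, which holds comfortably here since $a>3$ and $y\geq a+1\geq 5$; the conclusion stands unchanged.
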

\begin{proof}
Combine Lemma \ref{lem-h>tower} and Lemma \ref{lem-hepsm} with
$$
\underbrace{\tow_{a}(\ldots(\tow_{a}}_{m+3\text{ times}} (a+1))\ldots)>2\underbrace{\tow_{a}(\ldots(\tow_{a}}_{m+2\text{ times}} (a+1))\ldots).
$$
\end{proof}

\section{The estimation lemma}
We need to generalise the notion of natural sum. The idea remains the same as for ordinals below $\ve_{0}$.
\begin{defin}
Let $\alpha$ and $\beta$ be ordinals below $\vei$. We define their natural sum $\alpha \oplus \beta$ by induction on $m=\max \{l(\alpha),l(\beta)\}$:
\par
If $m=0$, then $\alpha, \beta < \ve_{0}$ and the definition is as before. 
\par
Assume we have defined the natural sum for all ordinals $\gamma, \delta$ such that $\max \{l(\gamma),l(\delta)\}=m$ and let $\max \{l(\alpha),l(\beta)\}=m+1$.  Let $\alpha=\ve_{m}^{\alpha_{0}}\cdot \xi_{0}+\ldots+\ve_{m}^{\alpha_{s}}\cdot \xi_{s}$ and $\beta=\ve_{m}^{\beta_{0}}\cdot \eta_{0}+\ldots+\ve_{m}^{\beta_{u}}\cdot \eta_{u}$, both in normal form to the base $\ve_{m}$. Permute items of both of these expansions so that we obtain a nonincreasing sequence of exponents. Then write this sequence as the normal form to the base $\ve_{m}$ of some ordinal which we denote $\alpha \oplus \beta$. While doing so, if $\alpha_{i}=\beta_{j}$ for some $i,j$,  then use the induction hypothesis to obtain $\xi_{i}\oplus\eta_{j}$ as the coefficient of the term with exponent $\alpha_{i}$.
\end{defin}

\begin{lem} If $h_{\beta\oplus\alpha}(a)\downarrow$, then $h_{\beta}\circ h_{\alpha}(a)\downarrow$ and $h_{\beta}\circ h_{\alpha}(a)\leq h_{\beta \oplus \alpha}(a)$. In other words, if a set $A$ is $\beta\oplus\alpha$-large, then there exists $u \in A$, such that $\{a \in A : a \leq u\}$ is $\alpha$-large and $\{ a \in A : u \leq a \}$ is $\beta$-large.
\end{lem}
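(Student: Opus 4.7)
The plan is to induct transfinitely on $\alpha$ with $\beta$ as a free parameter, and to read off the set-theoretic reformulation at the end. Existence of $h_\beta \circ h_\alpha(a)$ propagates along the same induction, so I focus on the inequality.

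For $\alpha=0$ the claim is immediate, since $\beta \oplus 0 = \beta$ and $h_0(a)=a$. For a successor $\alpha=\gamma+1$ the key identity is $\beta \oplus (\gamma+1) = (\beta \oplus \gamma)+1$, which follows from direct inspection of the natural-sum recipe: adding $1$ either increments a $\ve_m^0$-coefficient already present in $\beta \oplus \gamma$, or appends a fresh tail $\ve_m^0 \cdot 1$, and both options match ordinary $+1$ on the combined ordinal. Given the identity,
\[
h_{\beta \oplus (\gamma+1)}(a) = h_{\beta \oplus \gamma}(a+1) \geq h_\beta(h_\gamma(a+1)) = h_\beta(h_{\gamma+1}(a)),
\]
where the middle inequality is the induction hypothesis applied at input $a+1$.

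The main work is the limit step. By the fundamental-sequence clauses $h_\alpha(a) = h_{\alpha[a]}(a)$ and $h_{\beta\oplus \alpha}(a) = h_{(\beta\oplus \alpha)[a]}(a)$, and the induction hypothesis at $\alpha[a]<\alpha$ already yields $h_\beta(h_{\alpha[a]}(a)) \leq h_{\beta \oplus \alpha[a]}(a)$. It therefore suffices to establish
\[
h_{\beta \oplus \alpha[a]}(a) \leq h_{(\beta \oplus \alpha)[a]}(a),
\]
which, via Lemmas~\ref{lem-fundpsnseq} and~\ref{lem-h}(2), reduces to two sub-claims: (i) $\beta \oplus \alpha[a] \leq (\beta \oplus \alpha)[a]$ as ordinals, and (ii) $\psn{\beta \oplus \alpha[a]} < a$. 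Condition (ii) should follow routinely from the hypothesis that $h_{\beta \oplus \alpha}(a)$ is defined, since passing to $\alpha[a]$ and reapplying $\oplus$ does not inflate pseudonorms beyond $a$. The real work is (i): write $\alpha = \alpha' + \ve_m^\sigma \cdot \xi$ in short normal form and compare $\sigma$ with the minimal exponent of $\beta$. When that tail is inherited from $\alpha$, the fundamental sequence of $\beta \oplus \alpha$ acts on precisely the $\alpha$-tail, so equality holds. The remaining subcases (equal exponents forcing a coefficient merge, or a $\beta$-dominated tail) are slight variants that still yield the inequality.

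The main obstacle I foresee is exactly this tail-matching case analysis in the limit step. Once the inequality is established, the set-theoretic reformulation is immediate: given $A$ that is $\beta \oplus \alpha$-large with $a_0 = \min A$, put $u := h^A_\alpha(a_0)$, which exists by the first part. By the definition of $\alpha$-largeness, the initial segment $\{a \in A : a \leq u\}$ is $\alpha$-large, and since $h^A_\beta(u) = h^A_\beta \circ h^A_\alpha(a_0)\downarrow$, the residual $\{a \in A : u \leq a\}$ is $\beta$-large.
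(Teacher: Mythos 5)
Your overall strategy (induction on $\alpha$, split into zero/successor/limit cases, reduce the limit case to a monotonicity statement for the Hardy hierarchy via Lemmas~\ref{lem-fundpsnseq} and~\ref{lem-h}) resembles the shape one expects, but the limit step as you have written it does not go through, and the gap is precisely where you wave it off as ``routine.''

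First, you tacitly assume that when $\alpha$ is a limit, $\beta\oplus\alpha$ is also a limit, so that you may appeal to the fundamental-sequence clause $h_{\beta\oplus\alpha}(a)=h_{(\beta\oplus\alpha)[a]}(a)$. This fails whenever $\beta$ is a successor: e.g.\ $\beta=1$, $\alpha=\omega$ gives $\beta\oplus\alpha=\omega+1$, a successor. In general the natural sum of a successor and a limit is a successor, so this case must be handled separately (say by peeling off the finite tail of $\beta$), and that manipulation is not a one-liner.

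Second, and more seriously, your claim~(ii), that $\psn{\beta\oplus\alpha[a]}<a$, is simply false, and your claim~(i) plus an appeal to Lemma~\ref{lem-fundpsnseq} therefore does not deliver the needed inequality. Take $\beta=\omega$, $\alpha=\omega^2$. Then $\alpha[a]=\omega\cdot a$, $\beta\oplus\alpha[a]=\omega\cdot(a+1)$, and $\psn{\omega\cdot(a+1)}=a+1>a$: the natural sum merged two coefficients and inflated the pseudonorm past $a$. Worse, $(\beta\oplus\alpha)[a]=\omega^2+a$ does not even satisfy $(\beta\oplus\alpha)[a]\Rightarrow_a \beta\oplus\alpha[a]$: iterating $[\,a\,]$ from $\omega^2+a$ reaches $\omega^2$ and then drops directly to $\omega\cdot a$, never passing through $\omega\cdot(a+1)$. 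So the step $h_{\beta\oplus\alpha[a]}(a)\le h_{(\beta\oplus\alpha)[a]}(a)$, which you want to conclude from Lemma~\ref{lem-h}(2), is not justified by a single application of Lemma~\ref{lem-fundpsnseq} at the input $a$. The inequality is in fact true here, but proving it requires unwinding the Hardy recursion a few steps (so that the input has grown to at least $2a$) before the pseudonorm bound becomes usable, and organizing the induction so that this always happens is the real content of the lemma. That is exactly what the case analysis in the Bigorajska--Kotlarski argument (which the paper cites and adapts) is doing, and it cannot be dismissed as ``the fundamental sequence of $\beta\oplus\alpha$ acts on precisely the $\alpha$-tail.'' You need to treat the case where the $\beta$-tail is smaller, and in that case a genuinely different argument is required.
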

\begin{proof}
The proof goes in exactly the same way as in \cite{Bigorajska2006}, but one should use the extended notions of normal form and natural sum. 
\end{proof}

Let us identify the greatest ordinal $<\alpha$ whose pseudonorm is $\leq a$. We define the symbol $\GO(a, \alpha)$ for $a>0$ and $\alpha>0$ by induction on $\alpha$.  We let $\GO(a,1)=0$, $\GO(a, \omega)=a$ and $\GO(a,\ve_{m})=\GO(a,\tow_{a}(\ve_{m-1}))$, for $m\geq0$. Remind the notational agreement $\ve_{-1}=\omega$. Other cases are as follows:
$$
\GO(a, \alpha+1)=
\begin{cases}
\alpha & \text{ if }\hspace{0.5 cm}  \psn{\alpha}\leq a,\\
\GO(a,\alpha) & \text{ if }\hspace{0.5 cm}  \psn{\alpha}>a.
\end{cases}
$$
Before giving the limit step we put, for $m\geq -1$ and $\nu>1$, 
$$
\GO(a,\ve_{m}^{\nu})=\ve_{m}^{\GO(a,\nu)}\cdot \GO(a,\ve_{m})+\GO(a,\ve_{m}^{\GO(a,\nu)}),
$$
and
$$
\GO(a,\ve_{m}^{\nu}\cdot\psi)=\\ 
\begin{cases}
\ve_{m}^{\nu}\cdot \GO(a,\psi)+\GO(a,\ve_{m}^{\nu}) & \text{ if }\hspace{0.5 cm}  \psn{\nu }\leq a,\\
\ve_{m}^{\GO(a,\nu)}\cdot \GO(a,\ve_{m})+\GO(a,\ve_{m}^{\GO(a,\nu)}) & \text{ if }\hspace{0.5 cm}  \psn{\nu}>a,
\end{cases}
$$
where $\ve_{m}^{\nu}$ and $\ve_{m}^{\nu}\cdot\psi$ are written in normal form and $\psi>0$ is a limit. 
Finally, if $\alpha=\xi+\ve_{m}^{\nu}\cdot \psi$ in short normal form, with $\xi\neq 0$, we consider two cases:
\begin{enumerate}
\item If $\psi=\phi+1$,  then write  $\alpha=\xi'+\ve_{m}^{\nu}$, where $\xi'=\xi+\ve_{m}^{\nu}\cdot\phi$, and define
$$
\GO(a, \alpha)=
\begin{cases}
\xi' + \GO(a,\ve_{m}^{\nu}) & \text{ if }\hspace{0.5 cm}  \psn{\xi' }\leq a,\\
\GO(a,\xi') & \text{ if }\hspace{0.5 cm}  \psn{\xi'}>a.
\end{cases}
$$
\item If $\psi$ is a limit ordinal,  then
$$
\GO(a, \alpha)=
\begin{cases}
\xi + \GO(a,\ve_{m}^{\nu}\cdot \psi) & \text{ if }\hspace{0.5 cm}  \psn{\xi }\leq a,\\
\GO(a,\xi) & \text{ if }\hspace{0.5 cm}  \psn{\xi}>a.
\end{cases}
$$
\end{enumerate}  We extend Lemma 3.6 of \cite{Bigorajska2006}.

\begin{lem}
For every $a>0$ and every $\alpha>0$ we have: for all $\gamma$ if $\gamma < \alpha$ and $\psn{\gamma}\leq a$, then $\gamma \leq \GO(a, \alpha)$.
\end{lem}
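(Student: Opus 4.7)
The plan is to prove this by transfinite induction on $\alpha$, following exactly the case split used in the definition of $\GO(a,\alpha)$. The base cases are immediate. If $\alpha=1$ the only candidate is $\gamma=0=\GO(a,1)$. If $\alpha=\omega$, then any $\gamma<\omega$ is a natural number with $\psn{\gamma}=\gamma\le a=\GO(a,\omega)$. If $\alpha=\ve_m$, then any $\gamma<\ve_m$ with $\psn{\gamma}\le a$ satisfies $h(\gamma)\le\psn{\gamma}\le a$, so $\gamma<\tow_a(\ve_{m-1})$, and since $\tow_a(\ve_{m-1})<\ve_m$ the induction hypothesis delivers $\gamma\le\GO(a,\tow_a(\ve_{m-1}))=\GO(a,\ve_m)$.

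The successor step $\alpha=\beta+1$ is straightforward: if $\psn{\beta}\le a$, then either $\gamma=\beta=\GO(a,\alpha)$ or $\gamma<\beta$ and the IH gives $\gamma\le\GO(a,\beta)\le\beta$; if $\psn{\beta}>a$, then necessarily $\gamma\neq\beta$, hence $\gamma<\beta$ and the IH supplies $\gamma\le\GO(a,\beta)=\GO(a,\alpha)$.

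The heart of the argument is the treatment of $\alpha=\ve_m^{\nu}$ and $\alpha=\ve_m^{\nu}\cdot\psi$. Given $\gamma<\ve_m^{\nu}$ with $\psn{\gamma}\le a$, I write $\gamma=\ve_m^{\gamma_0}\cdot\zeta_0+\gamma'$ in short normal form to the base $\ve_m$, so $\gamma_0<\nu$. Lemma \ref{lem-bases} yields $\psn{\gamma_0}\le\psn{\gamma}\le a$, hence by IH $\gamma_0\le\GO(a,\nu)$. If this inequality is strict then $\gamma<\ve_m^{\GO(a,\nu)}\le\GO(a,\ve_m^{\nu})$; if it is an equality, two further applications of the IH, using Lemma \ref{lem-bases} to push $\psn{\cdot}\le a$ to the coefficient and the tail, give $\zeta_0\le\GO(a,\ve_m)$ and $\gamma'\le\GO(a,\ve_m^{\GO(a,\nu)})$, and these combine to the desired bound. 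The clause $\ve_m^{\nu}\cdot\psi$ branches on $\psn{\nu}\le a$ exactly as the definition does: in the branch $\psn{\nu}\le a$ the leading $\ve_m$-exponent of $\gamma$ may equal $\nu$, and then the IH applied to $\psi$ bounds the relevant coefficient; in the branch $\psn{\nu}>a$ the leading exponent is forced to lie strictly below $\nu$, and the previous analysis applies.

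Finally, for $\alpha=\xi+\ve_m^{\nu}\cdot\psi$ in short normal form with $\xi\neq 0$, a candidate $\gamma$ either has leading $\ve_m$-block strictly below that of $\xi$—so $\gamma<\xi$, handled by the IH and by the two sub-branches of the definition according as $\psn{\xi}\le a$ or $\psn{\xi}>a$ (respectively $\psn{\xi'}$ in the successor sub-clause, after the rewriting $\xi+\ve_m^{\nu}\cdot(\phi+1)=\xi'+\ve_m^{\nu}$)—or $\gamma$ agrees with $\xi$ up to its last block, in which case the remaining tail is controlled by the already established case for $\ve_m^{\nu}\cdot\psi$. The main obstacle is not any single deep step but the bookkeeping: one must verify in every sub-case that each appeal to the induction hypothesis is on a strictly smaller ordinal, and that Lemma \ref{lem-bases} correctly propagates the bound $\psn{\gamma}\le a$ to all the pieces extracted from $\gamma$'s short normal form. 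No substantially new idea beyond Bigorajska--Kotlarski is needed, but every branch must be checked.
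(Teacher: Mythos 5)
Your proof is correct and follows essentially the same route as the paper: induction on $\alpha$ mirroring the case analysis in the definition of $\GO$, using Lemma~\ref{lem-bases} to distribute the bound $\psn{\gamma}\le a$ over the pieces of $\gamma$'s normal form to the base $\ve_m$, and then applying the induction hypothesis separately to the exponent, the coefficient, and the tail. The only cosmetic differences are that you spell out the base and successor cases (which the paper inherits from Bigorajska--Kotlarski) and make the strict-versus-equality split on $\gamma_0\le\GO(a,\nu)$ explicit rather than collapsing it into one chained inequality.
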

\begin{proof}
We extend the proof given in \cite{Bigorajska2006} for $\alpha<\ve_{0}$ and allow $\alpha\geq \ve_{0}$. \par
So, let $\alpha=\ve_{m}$, with $m\geq 0$, and $\gamma < \alpha$. If in addition $\psn{\gamma}\leq a$, then $\gamma < \tow_{a}(\ve_{m-1})$, by the definition of psn. We apply the induction hypothesis to obtain
$$\gamma \leq \GO(a,\tow_{a}(\ve_{m-1}))=\GO(a,\ve_{m}).
$$
\par
Now, let $\alpha=\ve_{m}^{\nu}$ with $\nu>1$ and assume that for each $\beta<\alpha$ the lemma holds. Let $\gamma<\ve_{m}^{\nu}$ and $\psn{\gamma}\leq a$. Write $\gamma = \ve_{m}^{\mu}\cdot\xi+\psi$ with $\ve_{m}^{\mu}\ggg \psi$ and $\xi<\ve_{m}$. Then $\mu<\nu$, because $\gamma <\alpha$. Since $m\geq l(\gamma)-1$, Lemma \ref{lem-bases} implies $\psn{\xi}\leq \psn{\gamma} \leq a$. Now apply the inductive assumption to $\nu$ to obtain $\mu \leq \GO(a,\nu)$. Because $\psn{\xi}\leq a$ and $\xi<\ve_{m}$, the induction hypothesis applied to $\ve_{m}$ yields $\xi\leq \GO(a,\ve_{m})$. Moreover $\psn{\psi}\leq \psn{\gamma} \leq a$ and $\psi<\ve_{m}^{\mu}$, so $\psi<\ve_{m}^{\GO(a,\nu)}$. Thus by the induction hypothesis applied to this ordinal, we get $\psi\leq\GO(a,\ve_{m}^{\GO(a,\nu)}) $. All together we obtain
$$\gamma = \ve_{m}^{\mu}\cdot\xi+\psi\leq \ve_{m}^{\GO(a,\nu)}\cdot \GO(a,\ve_{m})+\GO(a,\ve_{m}^{\GO(a,\nu)}).$$
\par
Consider $\alpha=\ve_{m}^{\nu}\cdot \psi$, where $\alpha$ is written in normal form and $\psi$ is a limit. Assume that for each $\beta<\alpha$ the lemma holds. Let $\gamma<\ve_{m}^{\nu}\cdot \psi$ and $\psn{\gamma}\leq a$. Write $\gamma = \ve_{m}^{\mu}\cdot\xi+\phi$ with $\ve_{m}^{\mu}\ggg \phi$ and $\xi<\ve_{m}$. We consider two different cases:

\begin{enumerate}
\item $\mu=\nu :$ Then $\xi<\psi$ and, since $\psn{\xi}\leq \psn{\gamma}\leq a$, we may apply the induction hypothesis to the ordinal $\psi$ to obtain $\xi \leq \GO(a,\psi)$.  Furthermore $\phi<\ve_{m}^{\mu}=\ve_{m}^{\nu}$. Applying the induction hypothesis to this last ordinal yields $\phi \leq \GO(a,\ve_{m}^{\nu})$. Combining the previous, we find
$$\gamma= \ve_{m}^{\mu}\cdot\xi+\phi\leq 
\ve_{m}^{\nu}\cdot \GO(a,\psi)+\GO(a,\ve_{m}^{\nu})=\GO(a,\ve_{m}^{\nu}\cdot\psi),$$
since $\psn{\nu}=\psn{\mu}\leq \psn{\gamma}\leq a$.
\item $\mu<\nu :$ Since $\psn{\mu}\leq \psn{\gamma}\leq a$, we can apply the induction hypothesis to $\nu$ and obtain $\mu\leq \GO(a,\nu)$. Since $\xi<\ve_{m}$ and $\psn{\xi}\leq a$, we have by the induction hypothesis $\xi \leq  \GO(a,\ve_{m})$. Furthermore, since $\phi < \ve_{m}^{\mu}\leq \ve_{m}^{\GO(a,\nu)}$ and $\psn{\phi}\leq \psn{\gamma}\leq a$, the induction hypothesis yields $\phi\leq \GO(a,\ve_{m}^{\GO(a,\nu)})$. We obtain
$$\gamma= \ve_{m}^{\mu}\cdot\xi+\phi\leq 
\ve_{m}^{\GO(a,\nu)}\cdot \GO(a,\ve_{m})+\GO(a,\ve_{m}^{\GO(a,\nu)})\leq\GO(a,\ve_{m}^{\nu}\cdot\psi),$$
\end{enumerate}
which completes this case.
\par
Finally, let $\alpha=\xi+\ve_{m}^{\nu}\cdot \psi$. We consider two different cases. If $\psi$ is a limit ordinal the proof is analogous to the one  for $\alpha=\xi+\omega^{\nu}$. If $\psi=\phi+1$, then first write $\alpha=\xi'+\ve_{m}^{\nu}$, where $\xi'=\xi+\ve_{m}^{\nu}\cdot \phi$, and now proceed in the same way as in the first case.
\end{proof}

\begin{defin}
Define $F:(<\vei) \to (<\vei)$ by the following conditions:
\begin{enumerate}
\item $F(0)=0;$
\item $F(\alpha+1)=F(\alpha)+1;$
\item $\beta \gg \alpha \Rightarrow F(\beta+\alpha)=F(\beta)\oplus F(\alpha);$
\item $F(\omega^{n})=\omega^{n}+\omega^{n-1}+\ldots+\omega^{0}$ for $n<\omega$;
\item $F(\omega^{\alpha})=\omega^{\alpha}\cdot 2+1$ for $\alpha\geq \omega$.
\end{enumerate}

\end{defin}
Let us write an explicit formula for $F$. We write
$$\alpha=\omega^{\alpha_{0}}\cdot a_{0}+\ldots +\omega^{\alpha_{s}}\cdot a_{s}+\omega^{n}\cdot m_{n}+\ldots+\omega^{0}\cdot m_{0},$$
in normal form to the base $\omega$, with the difference that we allow some $m_{i}$'s to be zero. Then $F(\alpha)$ is equal to
\begin{align*} \omega^{\alpha_{0}}\cdot 2a_{0}+\ldots +\omega^{\alpha_{s}}\cdot 2a_{s}&\\
+\omega^{n}\cdot m_{n}+\omega^{n-1}\cdot (m_{n}+m_{n-1})+\ldots+\omega^{0}\cdot (m_{n}+\ldots+m_{0})&\\
+(a_{0}+\ldots+a_{s})&.
\end{align*}

In the next lemma we investigate the relation between the pseudonorm of $\alpha$ and the one of $F(\alpha)$. 
\begin{lem}
Let $\alpha<\vei$ with $l(\alpha)-1=m$ and $a=\psn{\alpha}$. Then 
$$\psn{F(\alpha)}\leq2\underbrace{\tow_{a}(\ldots(\tow_{a}}_{m+2\text{ times}} (a+1))\ldots).$$
\end{lem}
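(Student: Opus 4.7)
\par The plan is to proceed by transfinite induction on $\alpha$, exploiting the explicit base-$\omega$ formula for $F(\alpha)$ displayed just above. For brevity write $T_k := \underbrace{\tow_a(\cdots(\tow_a}_{k\text{ times}}(a+1))\cdots)$; the goal then reads $\psn{F(\alpha)} \leq 2T_{m+2}$. Decomposing $\alpha$ in base-$\omega$ Cantor normal form
$$\alpha = \omega^{\alpha_0}\cdot a_0 + \cdots + \omega^{\alpha_s}\cdot a_s + \omega^n\cdot m_n + \cdots + \omega^0\cdot m_0,$$
with $\alpha_0,\ldots,\alpha_s \geq \omega$, the explicit formula provides an expansion of $F(\alpha)$. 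Its pseudonorm, computed from the base-$\ve_m$ normal form of $F(\alpha)$ (since $l(F(\alpha))-1 = m$), is the maximum of $h(F(\alpha))$, the pseudonorms of the exponents, and the pseudonorms of the coefficients, and I would bound each by $2T_{m+2}$ separately.

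\par The exponent contribution is handled directly: the base-$\omega$ exponents of $F(\alpha)$ are exactly the $\alpha_i$ together with the finite exponents $n, n-1, \ldots, 0$. Each $\alpha_i$ has pseudonorm at most $a$ by Lemma~\ref{lem-bases}, and the finite exponents are bounded by $n \leq \psn{\alpha} = a$. After regrouping into base-$\ve_m$ normal form the exponents are rewritten, but Lemma~\ref{lem-bases} ensures that the resulting base-$\ve_m$ exponents have pseudonorm at most $a$.

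\par The coefficient contribution is the delicate part. The doubled ``large'' coefficients satisfy $2a_i \leq 2a$; the cumulative finite coefficients $m_n + \cdots + m_j$ are bounded by $(n+1)\cdot a \leq (a+1)a$ using $n, m_j \leq a$. The bottom coefficient additionally absorbs $a_0 + \cdots + a_s$, at most $a(s+1)$. Here $s+1$ counts the ``large'' exponent summands of $\alpha$, each a distinct ordinal below $\ve_{m+1}$ of pseudonorm at most $a$; by Lemma~\ref{lem-Nmk} one has $s+1 \leq |N^m_1(a)| \leq T_{m+1}$, so the bottom coefficient is at most $a\cdot T_{m+1} + (a+1)a$, which is clearly at most $2T_{m+2}$.

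\par For the height, $F(\alpha) \leq \alpha\cdot 2 + \omega < \omega^\alpha \leq \tow_{h(\alpha)+1}(\ve_{m-1})$, so $h(F(\alpha)) \leq h(\alpha) + 1 \leq a+1$, easily inside $2T_{m+2}$. The main obstacle I anticipate is the bookkeeping required to pass between base-$\omega$ (in which $F$ is explicitly given) and base-$\ve_m$ (in which pseudonorm is measured): one must verify that every base-$\ve_m$ coefficient of $F(\alpha)$, which is itself an ordinal below $\ve_m$, has pseudonorm bounded by $2T_{m+2}$, and this likely requires a nested induction on $m$ that reapplies the inductive hypothesis to these sub-ordinals.
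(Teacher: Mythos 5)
The proposal's overall plan (transfinite induction using the explicit base-$\omega$ formula for $F$) is in the same spirit as the paper, but a key counting step is wrong, and it is load-bearing. You bound the number $s+1$ of ``large'' base-$\omega$ summands of $\alpha$ by $|N^m_1(a)|$, i.e.\ by the number of ordinals \emph{below} $\ve_m$ of pseudonorm $\leq a$. But the base-$\omega$ exponents $\alpha_0,\ldots,\alpha_s$ of an $\alpha$ with $l(\alpha)-1=m$ are ordinals below $\ve_{m+1}$, not below $\ve_m$ (e.g.\ $\alpha=\ve_m^{2}$ has $\alpha_0=\ve_m\cdot 2>\ve_m$). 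The correct ambient set is $\{\zeta<\tow_{h(\alpha)-1}(\ve_m):\psn{\zeta}\leq a\}$, whose cardinality by Lemma~\ref{lem-Nmk} is roughly $\tow_{a-1}(T_{m+1})$, a tower of height about $m+a$ rather than your $T_{m+1}$ of height $m+1$. The final inequality still has enough slack to absorb the corrected count, so the conclusion is salvageable, but as written your bound on $s+1$ is off by an $(a{-}1)$-fold exponential.

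A second, structural issue: you flatten the whole expansion into a single base-$\omega$ Cantor normal form and then claim that the ``doubled coefficients'' are $2a_i\leq 2a$ and the exponents regroup with pseudonorm $\leq a$. The paper is more careful: it first writes $\alpha$ in base-$\ve_m$ normal form with ordinal coefficients $\xi_i<\ve_m$, and only then expands each $\xi_i$ in base $\omega$; it runs \emph{two} separate counts (a count $s$ of $\ve_m$-terms and a count $r=\max r_i$ of $\omega$-terms inside each coefficient) against Lemma~\ref{lem-Nmk} at the appropriate levels, and the resulting coefficient of $F(\alpha)$ in base $\ve_m$ is $\xi_i\oplus\xi_i$, not a natural number. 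Your single-level argument does not account for the pseudonorms of these ordinal coefficients, and the patch you gesture at in the last paragraph (a nested induction) is precisely what the paper carries out explicitly through its hierarchy of cases ($\alpha=0$, $\alpha=\omega^n$, $\alpha=\omega^\beta$, the two sum cases below and above $\ve_0$). Without actually setting up that nested induction and carrying out the two-level count, the proof as stated has a gap.
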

\begin{proof}
By induction on the complexity of $\alpha$.
\par
If $\alpha=0$, then $\psn{\alpha}=0$ and so
$$\psn{F(\alpha)}=\psn{0}=0<\tow_{a} (a+1).$$
\par
If $\alpha=\omega^{n}$, with $n \leq \omega$, and write $\alpha$ in its normal form $\alpha=\ve_{-1}^{n}\cdot1$. We have
$$F(\alpha)=F(\omega^{n})=\omega^{n}+\omega^{n-1}+\ldots+\omega^{0}.$$
So the normal form of $F(\alpha)$ equals $\ve_{-1}^{n}\cdot1+\ve_{-1}^{n-1}\cdot1+\ldots+\ve_{-1}^{0}\cdot1$. This yields
\begin{align*}
\psn{F(\alpha)} & \leq  \max \{h(F(\alpha)), \psn{n}, \psn{n-1},\ldots, \psn{0}, \psn{1}\}\\
& = \max \{h(\alpha), n,1\}\\
&=  \psn{\alpha}\\
&=a\\
&< 2\tow_{a} (a+1).
\end{align*}
\par
Let $\alpha=\omega^{\beta}$, with $\beta\geq \omega$, and write $\alpha$ in its normal form $\alpha=\ve_{m}^{\alpha_{0}}\cdot\xi_{0}$, for some $m\geq -1$. We have
$$F(\alpha)=F(\omega^{\beta})=\omega^{\beta}\cdot 2+1=\alpha\cdot 2+1.$$
So the normal form of $F(\alpha)$ equals $\ve_{m}^{\alpha_{0}}\cdot\xi_{0}\cdot 2+\ve_{m}^{0}$. This yields
\begin{align*}
\psn{F(\alpha)} & \leq  \max \{h(F(\alpha)), \psn{\alpha_{0}}, \psn{\xi_{0}\cdot 2}, \psn{0}, \psn{1}\}\\
& \leq \max \{h(\alpha), \psn{\alpha_{0}}, 2\cdot \psn{\xi_{0}}\}\\
&\leq  2 \cdot \psn{\alpha}\\
& = 2a\\
&< \underbrace{\tow_{a}(\ldots(\tow_{a}}_{m+2\text{ times}} (a+1))\ldots).
\end{align*}
\par
Let $\alpha=\beta+\gamma$, with $\ve_{0}>\beta \gg \gamma$,  $\beta=_{NF}\omega^{\beta_{0}}\cdot n_{0}+\ldots+\omega^{\beta_{s}}\cdot n_{s}$ and $\gamma=_{NF}\omega^{\gamma_{0}}\cdot m_{0}+\ldots+\omega^{\gamma_{u}}\cdot m_{u}$, both in normal form. First consider the case in which $\gamma_{u}\geq \omega$. Write $\beta$ in the following way
\begin{align*}
\beta& = \omega^{\beta_{0}}\cdot n_{0}+\ldots+\omega^{\beta_{s}}\cdot n_{s}\\
&=\sum_{i=0}^{s}\sum_{j=0}^{n_{i}}\omega^{\beta_{ij}},
\end{align*}
where $\beta_{ij}=\beta_{i}$, for every $i \in \{0,\ldots s\}$.
We do the same for $\gamma$,
\begin{align*}
\gamma& = \omega^{\gamma_{0}}\cdot m_{0}+\ldots+\omega^{\gamma_{u}}\cdot m_{u}\\
&=\sum_{i=0}^{u}\sum_{j=0}^{m_{i}}\omega^{\gamma_{ij}},
\end{align*}
where $\gamma_{ij}=\gamma_{i}$, for every $i \in \{0,\ldots u\}$. We use these notations to the base $\omega$ to obtain an explicit form of $F(\alpha)$. Let $n=\max \{ n_{i}: 0\leq i \leq s\}$ and $m = \max \{ m_{i} : 0\leq i \leq u\}$. Then
\begin{align*}
\psn{F(\alpha)}&=\psn{F(\beta+\gamma)}\\
&=\psn{F(\beta)\oplus F(\gamma)}\\
&=\psn{F(\sum_{i=0}^{s}\sum_{j=0}^{n_{i}}\omega^{\beta_{ij}})\oplus F(\sum_{i=0}^{u}\sum_{j=0}^{m_{i}}\omega^{\gamma_{ij}})}\\
&=\psn{\bigoplus_{i=0}^{s}\bigoplus_{j=0}^{n_{i}}F(\omega^{\beta_{ij}})\oplus\bigoplus_{i=0}^{u}\bigoplus_{j=0}^{m_{i}}F(\omega^{\gamma_{ij}})}\\
&=\psn{\bigoplus_{i=0}^{s}\bigoplus_{j=0}^{n_{i}}(\omega^{\beta_{ij}}\cdot 2+1)\oplus\bigoplus_{i=0}^{u}\bigoplus_{j=0}^{m_{i}}(\omega^{\gamma_{ij}}\cdot 2+1)}\\
&=\psn{ \sum_{i=0}^{s}\sum_{j=0}^{n_{i}}\omega^{\beta_{ij}}\cdot 2 +  \sum_{i=0}^{u}\sum_{j=0}^{m_{i}}\omega^{\gamma_{ij}}\cdot 2 + \sum_{i=0}^{s}n_{i} + \sum_{i=0}^{u}m_{i}}\\
& = \psn{\beta\cdot 2 + \gamma\cdot 2 + \sum_{i=0}^{s}n_{i} + \sum_{i=0}^{u}m_{i}}\\
& = \psn{\alpha\cdot 2  + \sum_{i=0}^{s}n_{i} + \sum_{i=0}^{u}m_{i}}\\
& \leq2 \cdot \psn{\alpha}  + \sum_{i=0}^{s}n_{i} + \sum_{i=0}^{u}m_{i}\\
& \leq2 \cdot \psn{\alpha}  + sn+um\\
& \leq2 \cdot \psn{\alpha}  + s\cdot \psn{\beta}+u\cdot \psn{\gamma}\\
& \leq2 \cdot \psn{\alpha}  + (s+u)\cdot \psn{\alpha}\\
& \leq2a+ ( \tow_{a-1}(a+1)+ \tow_{a-1}(a+1))\cdot a\\
& =2a+2a \cdot \tow_{a-1}(a+1)\\
& \leq2\tow_{a}(a+1).
\end{align*}
The first, the third and the fourth inequality are justified by the definition of the pseudonorm. The second by the definiton of $n$ and $m$. The second last holds because of
\begin{align*}
s&\leq|\{ \zeta < \tow_{a-1}(\omega) : \psn{\zeta}\leq \psn{\beta}\}|\\
&\leq|\{ \zeta < \tow_{a-1}(\omega) : \psn{\zeta}\leq a\}|\\
&= \tow_{a-1}(a+1),
\end{align*} 
where the equality is due to Lemma \ref{lem-Nmk}, and in the same way $u \leq \tow_{a-1}(a+1)$. To see this one uses a similar counting argument as done in Lemma \ref{lem-Nmk}.
\par
Let $\alpha=\beta+\gamma$, with $\ve_{0}>\beta \gg \gamma$,  $\beta=_{NF}\omega^{\beta_{0}}\cdot n_{0}+\ldots+\omega^{\beta_{s}}\cdot n_{s}$ and $\gamma=_{NF}\omega^{\gamma_{0}}\cdot m_{0}+\ldots+\omega^{\gamma_{u}}\cdot m_{u}$, both in normal form. Now allow some or all $\beta_{i}$ and $\gamma_{i}$ to be smaller than $\omega$. This case is dealt with in the same way as the previous one, but one should pay attention to possibly different forms of $F(\beta)$ and $F(\gamma)$, since now $\omega^{n}\cdot m$ may occur, where $n,m \in \Nat$.
\par
Let $\alpha=\beta+\gamma$, with $\beta \gg \gamma\geq\ve_{0}$, $\beta=_{NF}\ve_{m}^{\beta_{0}}\cdot \xi_{0}+\ldots+\ve_{m}^{\beta_{s}}\cdot \xi_{s}$ in normal form and $\gamma=\ve_{m}^{\gamma_{0}}\cdot \eta_{0}+\ldots+\ve_{m}^{\gamma_{u}}\cdot \eta_{u}$. First we consider the case in which $\ve_{m}^{\gamma_{u}}\cdot \eta_{u}\geq \omega^{\omega}$. We write $\beta$  to the base $\omega$,
\begin{align*}
\beta& = \ve_{m}^{\beta_{0}}\cdot \xi_{0}+\ldots+\ve_{m}^{\beta_{s}}\cdot \xi_{s} \\
& = \omega^{\ve_{m}\cdot\beta_{0}}\cdot \xi_{0}+\ldots+\omega^{\ve_{m}\cdot\beta_{s}}\cdot \xi_{s} \\
&=\sum_{i=0}^{s}\sum_{j=0}^{r_{i}}\omega^{\ve_{m}\cdot\beta_{i}+\xi_{i}^{j}},
\end{align*}
where $\xi_{i}=\sum_{j=0}^{r_{i}}\omega^{\xi_{i}^{j}}$, for every $i \in \{0,\ldots s\}$. In a similar way we obtain 
$$
\gamma = \sum_{i=0}^{u}\sum_{j=0}^{t_{i}}\omega^{\ve_{m}\cdot\gamma_{i}+\eta_{i}^{j}},
$$
where $\eta_{i}=\sum_{j=0}^{t_{i}}\omega^{\eta_{i}^{j}}$, for every $i \in \{0,\ldots u\}$. We use these notations to the base $\omega$ to obtain an explicit form of $F(\alpha)$. Let $r=\max \{ r_{i}: 0\leq i \leq s\}$ and $t = \max \{ t_{i} : 0\leq i \leq u\}$. Then
\begin{align*}
\psn{F(\alpha)}&=\psn{F(\beta+\gamma)}\\
&=\psn{F(\beta)\oplus F(\gamma)}\\
&=\psn{F(\sum_{i=0}^{s}\sum_{j=0}^{r_{i}}\omega^{\ve_{m}\cdot\beta_{i}+\xi_{i}^{j}})\oplus F(\sum_{i=0}^{u}\sum_{j=0}^{t_{i}}\omega^{\ve_{m}\cdot\gamma_{i}+\eta_{i}^{j}})}\\
&=\psn{\bigoplus_{i=0}^{s}\bigoplus_{j=0}^{r_{i}}F(\omega^{\ve_{m}\cdot\beta_{i}+\xi_{i}^{j}})\oplus\bigoplus_{i=0}^{u}\bigoplus_{j=0}^{t_{i}}F(\omega^{\ve_{m}\cdot\gamma_{i}+\eta_{i}^{j}})}\\
&=\psn{\bigoplus_{i=0}^{s}\bigoplus_{j=0}^{r_{i}}(\omega^{\ve_{m}\cdot\beta_{i}+\xi_{i}^{j}}\cdot 2+1)\oplus\bigoplus_{i=0}^{u}\bigoplus_{j=0}^{t_{i}}(\omega^{\ve_{m}\cdot\gamma_{i}+\eta_{i}^{j}}\cdot 2+1)}\\
&= \psn{\sum_{i=0}^{s}\sum_{j=0}^{r_{i}}\omega^{\ve_{m}\cdot\beta_{i}+\xi_{i}^{j}}\cdot 2 +  \sum_{i=0}^{u}\sum_{j=0}^{t_{i}}\omega^{\ve_{m}\cdot\gamma_{i}+\eta_{i}^{j}}\cdot 2 + \sum_{i=0}^{s}r_{i} +  \sum_{i=0}^{u}t_{i}}\\
& = \psn{ \beta\cdot 2 + \gamma\cdot 2 +  \sum_{i=0}^{s}r_{i} +  \sum_{i=0}^{u}t_{i}}\\
& = \psn{ \alpha \cdot 2 +  \sum_{i=0}^{s}r_{i} +  \sum_{i=0}^{u}t_{i}}\\
& \leq 2\cdot \psn{ \alpha} +  \sum_{i=0}^{s}r_{i} +  \sum_{i=0}^{u}t_{i}\\
& \leq 2\cdot \psn{ \alpha} +  rs+tu\\
& \leq 2a+2 \cdot \tow_{a}\underbrace{(\tow_{a}(\ldots(\tow_{a}}_{m\text{ times}} (a+1))\ldots)) \cdot  \tow_{a-1}\underbrace{(\tow_{a}(\ldots(\tow_{a}}_{m+1\text{ times}} (a+1))\ldots)) \\
& \leq 2\underbrace{\tow_{a}(\ldots(\tow_{a}}_{m+2\text{ times}} (a+1))\ldots).
\end{align*}
The first inequality holds by the definition of the pseudonorm. The second by the definition of $r$ and $t$.
The third inequality holds because of
\begin{align*}
s&\leq|\{ \zeta < \tow_{h(\beta)-1}(\ve_{m}) : \psn{\zeta}\leq \psn{\beta}\}|\\
&\leq|\{ \zeta < \tow_{a-1}(\ve_{m}): \psn{\zeta}\leq a\}|\\
&= \tow_{a-1}\underbrace{(\tow_{a}(\ldots(\tow_{a}}_{m+1\text{ times}} (a+1))\ldots)),
\end{align*} 
where the equality is due to Lemma \ref{lem-Nmk}, and in the same way 
$$u \leq \tow_{a-1}\underbrace{(\tow_{a}(\ldots(\tow_{a}}_{m+1\text{ times}} (a+1))\ldots)).$$ To see this one uses a similar counting argument as done in Lemma \ref{lem-Nmk}. We also have used
\begin{align*}
r&\leq|\{ \zeta < \tow_{\psn{\beta}}(\ve_{m-1}) : \psn{\zeta}\leq \psn{\beta}\}|\\
&\leq|\{ \zeta < \tow_{a}(\ve_{m-1}): \psn{\zeta}\leq a\}|\\
&= \tow_{a}\underbrace{(\tow_{a}(\ldots(\tow_{a}}_{m\text{ times}} (a+1))\ldots)),
\end{align*} 
and similarly
$$u \leq  \tow_{a}\underbrace{(\tow_{a}(\ldots(\tow_{a}}_{m\text{ times}} (a+1))\ldots)).$$
\par
Now consider the general case in which $\alpha=\beta+\gamma$, with $\beta \gg \gamma$ and no further restrictions whatsoever.
This case is dealt with in the same way as the previous ones, but one should pay attention to possibly different forms of $F(\beta)$ and $F(\gamma)$.
\end{proof}

\begin{cor} \label{cor-psnF}
If $l(\alpha) \leq m+1$, then 
$$\psn{F(\GO(a,\alpha))}\leq 2\underbrace{\tow_{a}(\ldots(\tow_{a}}_{m+2\text{ times}} (a+1))\ldots).$$
\end{cor}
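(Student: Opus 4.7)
The plan is to reduce the corollary immediately to the preceding lemma by exploiting the defining property of $\GO$. Inspection of the inductive definition of $\GO(a,\alpha)$ shows that in every clause we either select an ordinal whose pseudonorm is explicitly $\leq a$, or recurse into an argument on which the same invariant holds. Consequently
\[
\psn{\GO(a,\alpha)}\leq a,
\]
a fact we will need as our starting point. Set $\gamma:=\GO(a,\alpha)$, $a':=\psn{\gamma}$, and $m':=l(\gamma)-1$.

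Next I would pin down $m'$ and $a'$ in terms of $m$ and $a$. From $\gamma<\alpha$ and the hypothesis $l(\alpha)\leq m+1$ we get $\gamma<\ve_{m+1}$, hence $l(\gamma)\leq m+1$, i.e.\ $m'\leq m$. Meanwhile the observation above gives $a'\leq a$. Now apply the preceding lemma directly to $\gamma$ (whose hypothesis is exactly $l(\gamma)-1=m'$ and $a'=\psn{\gamma}$):
\[
\psn{F(\gamma)}\leq 2\underbrace{\tow_{a'}(\ldots(\tow_{a'}}_{m'+2\text{ times}}(a'+1))\ldots).
\]

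Finally I would monotonicity-bound the right-hand side by the target expression. Each $\tow_{k}(x)$ is nondecreasing in both the subscript $k$ and the argument $x$, and for $k\geq 1$ we have $\tow_{k}(x)>x$, so increasing the number of nested applications only enlarges the value. Substituting $a'\leq a$ and $m'\leq m$ therefore yields
\[
\psn{F(\GO(a,\alpha))}\leq 2\underbrace{\tow_{a}(\ldots(\tow_{a}}_{m+2\text{ times}}(a+1))\ldots),
\]
as required. There is no real obstacle here: the only small point that deserves an explicit line in the proof is the structural observation $\psn{\GO(a,\alpha)}\leq a$, which is a routine induction over the clauses defining $\GO$; everything else is monotonicity plus an invocation of the previous lemma.
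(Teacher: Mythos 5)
Your argument matches the paper's own proof, which likewise observes that $\psn{\GO(a,\alpha)}\leq a$ and $l(\GO(a,\alpha))\leq m+1$ and then invokes the preceding lemma; the monotonicity step you spell out (upgrading $a'\leq a$, $m'\leq m$ to the stated bound) is only implicit in the paper but is exactly what it relies on. Correct, same route.
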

\begin{proof}
Remark that $\psn{\GO(a,\alpha)}=a$ and $l(\GO(a,\alpha))\leq m+1$. Then use the previous lemma. 
\end{proof}

\begin{lem}
For all $\alpha$, for all $\beta \gg \alpha$, and for all $A$ and $G$ if $G: A \to (\leq \beta + \alpha)$ is strictly decreasing $\min(A)>1$ and $\forall x \in A$, $\psn{G(x)}\leq x$, then setting $w=\max \{ x \in A : G(x)\geq \beta \}$ we have: $\{ x \in A : x \leq w \}$ is at most $F(\alpha)$-large.
\end{lem}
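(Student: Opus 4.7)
The plan is transfinite induction on $\alpha$, organised by the recursive clauses defining $F$ and paralleling the estimation lemma from \cite{Bigorajska2006} for $\alpha<\ven$; what is new here is that the induction must be carried through the cases created by the extended normal form, fundamental sequences and natural sum developed above.

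In the base case $\alpha=0$, strict monotonicity of $G$ together with $G(x)\le\beta$ on all of $A$ forces $\{x\in A:G(x)\ge\beta\}$ to be a singleton, so $\{x\in A:x\le w\}$ is at most $F(0)=0$-large.

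The main inductive move is splitting. When $\alpha=\alpha_1+\alpha_2$ admits a nontrivial decomposition with $\alpha_1\gg\alpha_2$ and $\alpha_2>0$, I would set $w'=\max\{x\in A:G(x)\ge\beta+\alpha_1\}$. On $\{x\in A:w'<x\le w\}$ the $G$-values lie in $[\beta,\beta+\alpha_1)$, so the inductive hypothesis applied with target ordinal $\alpha_1$ and base $\beta$ gives that this piece is at most $F(\alpha_1)$-large. On $\{x\in A:x\le w'\}$ the $G$-values lie in $[\beta+\alpha_1,\beta+\alpha]$, and since $\beta\gg\alpha$ implies $\beta+\alpha_1\gg\alpha_2$, the inductive hypothesis with target $\alpha_2$ and new base $\beta+\alpha_1$ gives that this piece is at most $F(\alpha_2)$-large. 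By the splitting lemma for $\oplus$-largeness proved above, the union is at most $F(\alpha_1)\oplus F(\alpha_2)=F(\alpha)$-large, using the defining clause $\beta\gg\alpha\Rightarrow F(\beta+\alpha)=F(\beta)\oplus F(\alpha)$.

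The genuinely new work is in the indecomposable cases $\alpha=\omega^n$, $\alpha=\omega^\mu$ with $\mu\ge\omega$ and, in order to reach ordinals up to $\vei$, $\alpha=\ve_m^\mu$. Here no nontrivial $\gg$-decomposition of $\alpha$ is available, and one must instead descend through the fundamental sequence of $\beta+\alpha$. The hypothesis $\psn{G(x)}\le x$ is essential: by Lemma \ref{lem-fundpsnfull} it guarantees that $G(\min A)<(\beta+\alpha)[\min A]$, so the inductive hypothesis can be applied to the reduced target $(\beta+\alpha)[\min A]$; iterating this along $A$ and re-assembling the pieces by the splitting lemma yields the overall bound $F(\alpha)$. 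The target values $F(\omega^n)=\omega^n+\cdots+1$ and $F(\omega^\mu)=\omega^\mu\cdot 2+1$ are precisely what is needed to absorb these walks. The main obstacle, and the point where the paper's preparation pays off, is to check that these descents behave correctly as $\alpha$ climbs through the $\ve_m$-tower: one must verify that the extended fundamental sequences yield the expected inequalities between successive ordinals, and that the pseudonorm bound, via Corollary \ref{cor-psnF}, keeps each sub-interval's target within $F(\alpha)$ after natural summation.
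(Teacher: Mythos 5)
Your overall framing—transfinite induction organised by the clauses of $F$, with the $\gg$-decomposable case handled by splitting at $w'=\max\{x:G(x)\ge\beta+\alpha_1\}$ and reassembling via the natural-sum splitting lemma—is exactly how the decomposable case goes, and that part of the sketch is sound. You also correctly identify that the indecomposable cases ($\omega^n$, $\omega^\mu$, and now $\ve_m^\nu$, $\ve_m$) carry the new content and that the pseudonorm hypothesis $\psn{G(x)}\le x$ is the lever.

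The gap is in the mechanism you propose for those indecomposable cases. You say one should ``iterate along $A$ and re-assemble the pieces by the splitting lemma,'' but the splitting lemma plays no role there: there is no $\oplus$-decomposition of $F(\lambda)$ aligned with a partition of $D=\{x:G(x)\ge\beta\}$, and repeatedly descending along the fundamental sequence at each $a_i$ produces a family of ever-finer bounds with no clear way to add them back up to $F(\lambda)$. What the paper actually does is apply the induction hypothesis exactly once, to $D\setminus\{a_0\}$ with the reduced target $\GO(a_1,\lambda)$ (the largest ordinal $<\lambda$ of pseudonorm $\le a_1$, not the fundamental-sequence term $\lambda[a_1]$—and note your use of Lemma \ref{lem-fundpsnfull} on $G(\min A)$ has an off-by-one issue, since you only know $\psn{G(\min A)}\le\min A$, not $<$). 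The passage from ``$D\setminus\{a_0\}$ at most $F(\GO(a_1,\lambda))$-large'' to ``$D$ at most $F(\lambda)$-large'' is then argued by contradiction through the Hardy hierarchy: assuming $D$ is too large, one extracts $E\subseteq D$ that is $\lambda\cdot2$-large, sets $z=h^E_\lambda(a_1)$, and uses Lemma \ref{lem-htow} together with Corollary \ref{cor-psnF} to show $z>\psn{F(\GO(a_1,\lambda))}$, hence $\lambda\Rightarrow_z F(\GO(a_1,\lambda))$ by Lemma \ref{lem-fundpsnseq}, hence $h^E_{F(\GO(a_1,\lambda))}(a_1)\downarrow$, contradicting the induction hypothesis. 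This Hardy-hierarchy comparison is the actual re-assembly step; without it, the bound $F(\lambda)=\lambda\cdot2+1$ does not emerge from the descent.
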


\begin{proof}
The proof is an extension of the one in \cite{Bigorajska2006}. As we allow $\alpha$ and $\beta$ to be larger than $\ve_{0}$, we need to consider extra cases. Let $D=\{x \in A : G(x)\geq \beta \}$.
\par
\textsc{Case 1':} If $\lambda = \ve_{0}$, then we have $G(a_{0})\leq \beta + \ve_{0}$, and so 
$$G(a_{1})\leq \beta + \GO(a_{1},\ve_{0})=\beta+\GO(a_{1},\tow_{a_{1}}(\omega)),$$
which is exactly \textsc{Case 4}, already dealt with in \cite{Bigorajska2006}.
\par
\textsc{Case 1'' :} $\lambda = \ve_{m}$ with $m>0$. As usual we have $G(a_{0})\leq \beta + \ve_{m}$, and so 
$$G(a_{1})\leq \beta + \GO(a_{1},\ve_{m})=\beta+\GO(a_{1},\tow_{a_{1}}(\ve_{m-1})).$$
We apply the induction hypothesis to $\alpha=\GO(a_{1},\tow_{a_{1}}(\ve_{m-1}))$ and infer that $D\setminus \{a_{0}\}$ is at most $F(\alpha)$-large. Assume that $D$ is not at most $F(\tow_{a_{1}}(\ve_{m-1}))$-large. Remark that $\tow_{a_{1}}(\ve_{m-1})=\omega^{\ve_{m-1}\cdot\tow_{a_{1}-1}(\ve_{m-1})}$, so $D$ is not at most $\tow_{a_{1}}(\ve_{m-1})\cdot 2 + 1$-large. Then $D\setminus \{a_{0}\}$ is not at most $\tow_{a_{1}}(\ve_{m-1})\cdot 2 $-large, so $E=D\setminus \{a_{0}, \max D \}$ is $\tow_{a_{1}}(\ve_{m-1})\cdot 2 $-large. Let $z=h^{E}_{\tow_{a_{1}}(\ve_{m-1})}(a_{1})$. Then 
\begin{align*}
z&= h^{E}_{\tow_{a_{1}}(\ve_{m-1})}(a_{1})\\
&> h^{E}_{\ve_{m-1}}(a_{1})\\&> 2\underbrace{\tow_{a_{1}}(\ldots(\tow_{a_{1}}}_{m+1\text{ times}} (a_{1}+1))\ldots)\\
&\geq \psn{F(\GO(a_{1},\tow_{a_{1}}(\ve_{m-1})))},
\end{align*}
using the results of Corollary \ref{cor-psnF} and Lemma \ref{lem-htow}. So $$\tow_{a_{1}}(\ve_{m-1})\Rightarrow_{z}F(\GO(a_{1},\tow_{a_{1}}(\ve_{m-1}))),$$ hence $h^{E}_{F(\GO(a_{1},\tow_{a_{1}}(\ve_{m-1})))}(z)\downarrow$. From this we infer that 
$$h^{E}_{F(\GO(a_{1},\tow_{a_{1}}(\ve_{m-1})))}(a_{1})\downarrow,$$ and we get a contradiction with the fact that $D\setminus \{ a_{0}\}$ is at most $F(\alpha)$-large.
\par
\textsc{Case 4' :} $\lambda = \ve_{m}^{\nu}$ with $l(\lambda)=m\geq0$ and $\nu>1$. Using the same notation as above we see that $G(a_{0})\leq \beta + \ve_{m}^{\nu}$, and so 
$$G(a_{1})\leq \beta + \GO(a_{1},\ve_{m}^{\nu}).$$
We apply the induction hypothesis to $\alpha= \GO(a_{1},\ve_{m}^{\nu})$
and infer that $D\setminus \{a_{0}\}$ is at most $F(\alpha)$-large. Assume that $D$ is not at most $F(\ve_{m}^{\nu})$-large. Remark that $\ve_{m}^{\nu}=\omega^{\ve_{m}\cdot \nu}$, so $D$ is not at most $\ve_{m}^{\nu}\cdot 2 + 1$-large. Then $D\setminus \{a_{0}\}$ is not at most $\ve_{m}^{\nu}\cdot 2 $-large, so $E=D\setminus \{a_{0}, \max D \}$ is $\ve_{m}^{\nu}\cdot 2 $-large. Let $z=h^{E}_{\ve_{m}^{\nu}}(a_{1})$. Remind that $l(\lambda)=m$. Then 
\begin{align*}
z&= h^{E}_{\ve_{m}^{\nu}}(a_{1})\\
&\geq h^{E}_{\ve_{m}}(a_{1})\\
&> 2\underbrace{\tow_{a_{1}}(\ldots(\tow_{a_{1}}}_{m+2\text{ times}} (a_{1}+1))\ldots)\\
&\geq \psn{F(\GO(a_{1},\ve_{m}^{\nu}))},
\end{align*}
using the results of Corollary \ref{cor-psnF} and Lemma \ref{lem-htow}. So $$\ve_{m}^{\nu}\Rightarrow_{z}F(\GO(a_{1},\ve_{m}^{\nu}))),$$ hence $h^{E}_{F(\alpha)}(z)\downarrow$. From this we infer that $h^{E}_{F(\alpha)}(a_{1})\downarrow$, and we get a contradiction with the fact that $D\setminus \{ a_{0}\}$ is at most $F(\alpha)$-large.
\end{proof}
Now we are finally ready to give the estimation lemma.

\begin{lem} \emph{(The estimation lemma)} For every $\alpha<\vei$ we have: for every $A \subseteq \Nat$ with $\min A>0$, if there exists a strictly decreasing function $G: A \to (\leq\alpha)$ such that $\psn{G(a)}\leq a$ for all $a \in A$, then $A$ is at most $F(\alpha)$-large. 
\end{lem}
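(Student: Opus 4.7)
The plan is to obtain the estimation lemma as a direct specialization of the preceding lemma by taking $\beta = 0$. With $\beta = 0$, the preceding lemma's hypothesis ``$G: A \to (\leq \beta+\alpha)$'' becomes exactly ``$G: A \to (\leq \alpha)$'', matching the estimation lemma's hypothesis. Moreover, since $G$ takes ordinal values, $G(x) \geq 0$ holds for every $x \in A$, so the set $\{x \in A : G(x) \geq \beta\}$ equals all of $A$ and $w = \max A$. Therefore the initial segment $\{x \in A : x \leq w\}$ of the preceding lemma's conclusion is precisely $A$, which is then at most $F(\alpha)$-large.

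A small discrepancy to address is that the preceding lemma assumes $\min A > 1$ while the estimation lemma only requires $\min A > 0$. The edge case $\min A = 1$ is easy: since $\psn{G(1)} \leq 1$, we have $G(1) \in \{0,1\}$, and strict decrease forces $|A| \leq 2$, which is $F(\alpha)$-small for any $\alpha \geq 1$ (the case $\alpha = 0$ being trivial). A second concern is whether $\beta = 0$ is a permitted instance of the implicit $\beta \gg \alpha$ condition in the preceding lemma. If the paper's convention forbids it (since $\gg$ typically requires nonzero summands), the alternative is to prove the estimation lemma by a direct transfinite induction on $\alpha$ mirroring the case analysis of the preceding lemma's proof, only without the additive $\beta$ term; every key ingredient (the $\GO$-lemma, Corollary \ref{cor-psnF}, Lemma \ref{lem-htow}, and the Hardy-hierarchy bounds) is already available and used in the same way.

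If the clean $\beta = 0$ reduction goes through, the main obstacle is merely to check the boundary conditions outlined above. If not, the obstacle shifts to correctly pairing each structural case of $\alpha$ (successor, $\ve_m$, $\ve_m^\nu$ for $\nu>1$, $\ve_m^\nu\cdot\psi$, and general sum $\xi + \ve_m^\nu\cdot\psi$) with the matching contradiction argument, in the style of Cases~$1''$ and $4'$ of the preceding lemma. Concretely: letting $a_0 = \min A$ and $a_1$ the next element, one has $G(a_1) \leq \GO(a_1,\alpha) < \alpha$, so the induction hypothesis applied to $A\setminus\{a_0\}$ yields $F(\GO(a_1,\alpha))$-smallness; assuming for contradiction that $A$ is not at most $F(\alpha)$-large, strip off the top element to obtain an $\alpha\cdot 2$-large (or $\tow_{a_1}(\ve_{m-1})\cdot 2$-large, etc.) subset, and combine Corollary \ref{cor-psnF} with Lemma \ref{lem-htow} to produce a point $z$ with $\alpha \Rightarrow_{z} F(\GO(a_1,\alpha))$, forcing $h^E_{F(\GO(a_1,\alpha))}(a_1)\!\downarrow$ and contradicting the inductive conclusion.
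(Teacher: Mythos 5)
Your main strategy is exactly the paper's: the entire proof given in the paper is the single line ``Apply the previous lemma with $\beta=0$.'' The convention for $\beta\gg\alpha$ in this line of work does admit $\beta=0$ (the condition is then vacuous), so your worry on that front is unfounded and your fallback direct induction is unnecessary. You have, however, spotted something the paper silently passes over: the preceding lemma is stated with $\min A>1$ while the estimation lemma asks only for $\min A>0$. Your proposed patch for the case $\min A=1$ is not quite right as written, though. From $\psn{G(1)}\leq 1$ you cannot conclude $G(1)\in\{0,1\}$: for instance $\psn{\omega}=\psn{\omega+1}=\psn{\omega^\omega}=1$, and by Lemma~\ref{lem-Nmk} there are $\tow_{1}(\tow_{1}(2))=16$ ordinals below $\ve_0$ of pseudonorm $\leq 1$, not two. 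So strict decrease does not force $|A|\leq 2$. A cleaner repair is to apply the preceding lemma to $A'=A\setminus\{1\}$ (which has $\min A'\geq 2$) with $G$ restricted to $A'$, whose values lie in $(\leq G(\min A'))\subseteq(\leq\alpha)$; alternatively just observe that in every downstream use (Theorem~\ref{thm-partition}) one has $\min A>3$, so the looser $\min A>0$ in the estimation lemma's statement is most plausibly a small slip that should read $\min A>1$.
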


\begin{proof}
Apply the previous lemma with $\beta=0$.
\end{proof}

\section{Partitioning $\alpha$-large sets}

\begin{defin} Let $\alpha$ and $\beta$ be ordinals below $\vei$. If $\alpha=\ve_{m}^{\alpha_{0}}\cdot\xi_{0}+\ldots+\ve_{m}^{\alpha_{s}}\cdot\xi_{s}$ 
(in normal form to the base $\ve_{m}$),
then define $v(\ve_{m};\alpha,\delta)$ as the coefficient of $\ve_{m}^{\delta}$ (and $v(\ve_{m};\alpha,\delta)=0$ if $\ve_{m}^{\delta}$ does not occur in the normal form). Now define
$$\LD(\ve_{m};\alpha,\beta):= \max \{ \delta < \vei : v(\ve_{m};\alpha,\delta)\neq v(\ve_{m};\beta,\delta)\}.$$
\end{defin}

\begin{defin}
Let $\Gamma \subseteq (<\vei)$ and $\Theta: \Gamma \to (<\alpha)$. We say that $\Theta$ is an ordinal (or $\alpha$-ordinal) estimating function if it is strictly increasing and for all $\gamma \in \Gamma$, $\psn{\Theta(\gamma)}\leq \psn{\gamma}$.
\end{defin}

Let us make the convention that when working with finite sets of ordinals we write them down in decreasing order.

\begin{defin} Let $\alpha,\beta, \gamma <\vei$.
$$
L_{3}(\ve_{m};\alpha, \beta,\gamma):=
\begin{cases}
0 & \text{ if   } \, \LD(\ve_{m};\alpha,\beta)<\LD(\ve_{m};\beta,\gamma), \\
1 & \text{ if   } \, \LD(\ve_{m};\alpha,\beta)= \LD(\ve_{m};\beta,\gamma),\\
2 & \text{ if   } \, \LD(\ve_{m};\alpha,\beta) > \LD(\ve_{m};\beta,\gamma). \\
\end{cases}
$$
\end{defin}

\begin{lem}\label{lem-L3} Let $\Gamma$ be a finite subset of $(<\vei)$ such that $\max \Gamma < \tow_{s}(\ve_{m})$.
\begin{enumerate}
\item If $L_{3}(\ve_{m})$ colors $[\Gamma]^{3}$ by 1, then there exists an ordinal estimating function $\Theta$ defined on $\Gamma$ with values in $(<\ve_{m}).$
\item If  $L_{3}(\ve_{m})$ colors $[\Gamma]^{3}$ by 2,  then there exists an ordinal estimating function $\Theta$ defined on $\Gamma\setminus \{\min \Gamma \}$ with values in $(<\tow_{s-1}(\ve_{m})).$
\item Let $\gamma_{0}=\max \Gamma$ and $\psn{\gamma_{0}}=a$. If $\psn{\gamma_{0}}\geq s-1$ and $L_{3}(\ve_{m})$ colors $[\Gamma]^{3}$ by 0, then 
$$|\Gamma|\leq \tow_{s-1}(\underbrace{\tow_{a}(\ldots(\tow_{a}}_{m+1 \text{ times}} (a+1))\ldots))+1.$$
\end{enumerate}
\end{lem}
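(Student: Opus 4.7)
The plan is to write $\Gamma=\{\gamma_{0}>\gamma_{1}>\dots>\gamma_{k}\}$ in decreasing order and to set $\delta_{i}:=\LD(\ve_{m};\gamma_{i},\gamma_{i+1})$ for $0\leq i<k$; in each of the three cases the conclusion is then read off from the combinatorial structure that the colouring forces on the sequence $(\delta_{i})$ and on the coefficients $v(\ve_{m};\gamma_{i},\cdot)$. Throughout I rely on two facts: that $\gamma_{0}<\tow_{s}(\ve_{m})$ forces every exponent appearing in the normal form of $\gamma_{0}$ to base $\ve_{m}$ to lie below $\tow_{s-1}(\ve_{m})$, and that Lemma~\ref{lem-bases} bounds the pseudonorm of every coefficient or exponent of $\gamma_{i}$ by $\psn{\gamma_{i}}$.

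For Case 1 I apply the colouring to consecutive triples $(\gamma_{i},\gamma_{i+1},\gamma_{i+2})$ to obtain $\delta_{0}=\delta_{1}=\cdots=\delta_{k-1}=:\delta$. Since consecutive $\gamma_{i},\gamma_{i+1}$ agree on all coefficients above $\delta$, the whole chain agrees above $\delta$; and the definition of $\LD$ together with $\gamma_{i}>\gamma_{i+1}$ forces the coefficients at $\delta$ to form a strictly decreasing sequence $v(\ve_{m};\gamma_{0},\delta)>v(\ve_{m};\gamma_{1},\delta)>\dots>v(\ve_{m};\gamma_{k},\delta)$ of ordinals below $\ve_{m}$. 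I then set $\Theta(\gamma_{i}):=v(\ve_{m};\gamma_{i},\delta)$; this strictly decreasing sequence of coefficients matches the decreasing order of $\gamma_{i}$, so $\Theta$ is order-preserving, and the pseudonorm estimate follows from Lemma~\ref{lem-bases}.

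For Case 2 the colouring applied to triples $(\gamma_{i-1},\gamma_{i},\gamma_{i+1})$ forces $\delta_{0}>\delta_{1}>\cdots>\delta_{k-1}$. Each $\delta_{i}$ actually occurs as an exponent in the normal form of $\gamma_{i}$ (since $v(\ve_{m};\gamma_{i},\delta_{i})>v(\ve_{m};\gamma_{i+1},\delta_{i})\geq 0$), so $\delta_{i}\leq\delta_{0}<\tow_{s-1}(\ve_{m})$ and $\psn{\delta_{i}}\leq\psn{\gamma_{i}}$ by Lemma~\ref{lem-bases}. I set $\Theta(\gamma_{i}):=\delta_{i}$ on $\Gamma\setminus\{\gamma_{k}\}$; because $i\mapsto\gamma_{i}$ and $i\mapsto\delta_{i}$ are both strictly decreasing, $\Theta$ is strictly increasing with respect to the natural order on $\Gamma\setminus\{\min\Gamma\}$.

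The step I expect to require the most care is in Case 3, where the triples force $\delta_{0}<\delta_{1}<\cdots<\delta_{k-1}$ and I have to show that each $\delta_{i}$ appears as an exponent in the normal form of the single ordinal $\gamma_{0}$. The idea: along the chain $\gamma_{0},\gamma_{1},\ldots,\gamma_{i}$ every comparison $(\gamma_{j},\gamma_{j+1})$ with $j<i$ affects coefficients only at exponents $\delta_{j}<\delta_{i}$, so $\gamma_{0}$ and $\gamma_{i}$ agree on all coefficients above $\delta_{i}$, and then $v(\ve_{m};\gamma_{0},\delta_{i})=v(\ve_{m};\gamma_{i},\delta_{i})>v(\ve_{m};\gamma_{i+1},\delta_{i})\geq 0$ forces $\delta_{i}$ to occur in $\gamma_{0}$'s normal form. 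Hence $\{\delta_{0},\ldots,\delta_{k-1}\}$ is a set of $k$ distinct ordinals below $\tow_{s-1}(\ve_{m})$ of pseudonorm $\leq\psn{\gamma_{0}}=a$; under the assumption $a\geq s-1$, Lemma~\ref{lem-Nmk} applied with its parameter $k$ set to $s-1$ bounds this set by $\tow_{s-1}(\underbrace{\tow_{a}(\ldots(\tow_{a}}_{m+1\text{ times}}(a+1))\ldots))$, giving the claimed inequality for $|\Gamma|=k+1$.
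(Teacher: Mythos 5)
Your argument is correct and follows essentially the same route as the paper: in each case set $\delta_{i}=\LD(\ve_{m};\gamma_{i},\gamma_{i+1})$, use the colour to deduce $\delta_{i}$ constant / strictly decreasing / strictly increasing, and then either read off the $\ve_m$-ordinal estimating function (coefficient at the common exponent in case~1, the exponent $\delta_i$ itself in case~2) or count via Lemma~\ref{lem-Nmk} in case~3. Where you go beyond the paper's terse write-up is precisely where it helps: in case~3 the paper asserts that $\ve_m^{\delta_i}$ occurs in $\gamma_0$'s base-$\ve_m$ expansion but only verifies $\xi_i>0$ for $\gamma_i$; you supply the missing step that $\gamma_0$ and $\gamma_i$ agree on all coefficients above $\delta_i$ (because $\delta_j<\delta_i$ for $j<i$), which is exactly what is needed to get $\psn{\delta_i}\le\psn{\gamma_0}=a$ rather than merely $\psn{\delta_i}\le\psn{\gamma_i}$, and hence to apply Lemma~\ref{lem-Nmk}. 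The same agreement-above-$\delta$ observation also cleanly justifies the constancy of the head $\rho$ and the strict decrease of the coefficients in case~1.
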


\begin{proof} Let $\Gamma=\{\gamma_{0}, \ldots, \gamma_{r-1}\}$ be a decreasing enumeration of $\Gamma$.  Since $\max \Gamma < \tow_{s}(\ve_{m})$, we have $l(\gamma_{i})\leq m+1$ for every $i \in \{0,...,r-1\}$.
\begin{enumerate} 
\item  Let $\gamma_{i}=\rho+\ve_{m}^{\delta_{i}}\cdot \xi_{i}+\tau_{i}$, be written in normal form to the base $\ve_{m}$. In this case neither $\rho$ nor $\delta_{i}$ depends on $i$. Put $\Theta(\gamma_{i})=\xi_{i}$. $\Theta$ has the required properties (use Lemma \ref{lem-bases}).
\item  Put $\delta_{i}=\LD(\ve_{m};\gamma_{i},\gamma_{i+1})$ and $\Theta(\gamma_{i})=\delta_{i}$. $\Theta$ has the required properties (use Lemma \ref{lem-bases}). In particular, its values are smaller than $\tow_{s-1}(\ve_{m})$. 

\item Put $\delta_{i}=\LD(\ve_{m};\gamma_{i},\gamma_{i+1})$. We assert that $\ve_{m}^{\delta_{i}}$ occurs with a nonzero coefficient in the Cantor normal form expansion to the base $\ve_{m}$ of $\gamma_{0}$ for every $i<r-1$. Indeed, fix $i<r-1$. We write  $\gamma_{i}=\rho_{i}+\ve_{m}^{\delta_{i}}\cdot \xi_{i}+\tau_{i}$ and compare this with the expansion of $\gamma_{i+1}$ to the base $\ve_{m}$. We see that $\xi_{i}$ must be greater than the coefficient at $\ve_{m}^{\delta_{i}}$ in the expansion of $\gamma_{i+1}$. In particular, $\xi_{i}>0$. Then  
\begin{align*}
|\Gamma|& = |\{ \delta_{i} : i < r-1\}|+1\\
&\leq |\{ \alpha < \tow_{s-1}(\ve_{m}):\psn{\alpha}\leq a\}|+1\\
&= \tow_{s-1}\underbrace{(\tow_{a}(\ldots(\tow_{a}}_{m+1\text{ times}} (a+1))\ldots))+1,
\end{align*}
where the last equality holds by Lemma \ref{lem-Nmk}. (Remark that we required $a\geq s-1$.)
\end{enumerate}
\end{proof}

\begin{lem}\label{lem-existence}
Let $m\geq0$. Then for every $k\geq 3$ there existst a partition $L_{k}(\ve_{m})$ of $[\vei]^{k}$ into $3^{k-2}$ parts such that for every $\Gamma \subseteq (<\vei)$ homogeneous for $L_{k}(\ve_{m})$, if $\max \Gamma < \tow_{k-1}(\ve_{m})$ and $\psn{\max \Gamma}\geq k-2$, then letting $\Gamma'$ be $\Gamma$ without the last $\frac{(k-2)(k-1)}{2}$ elements we have: there exists an ordinal estimating function $\Theta: \Gamma' \to (<\ve_{m})$ or $|\Gamma| \leq \tow_{k-2}(\underbrace{\tow_{a}(\ldots(\tow_{a}}_{m+1 \text{ times}} (a+1))\ldots))+k-2,$
where $a=\psn{\max \Gamma}$.
\end{lem}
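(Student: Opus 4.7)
The plan is to proceed by induction on $k\ge 3$, using Lemma \ref{lem-L3} together with an inductive doubling of the coloring. The base case $k=3$ reduces (after one or two applications of Lemma \ref{lem-L3}) to the three cases $1,2,0$: color $1$ yields the estimating function outright; color $0$ yields the size bound; and color $2$ requires one further application of Lemma \ref{lem-L3} to the image $\Theta_0(\Gamma\setminus\{\min\Gamma\})\subseteq(<\tow_1(\ve_m))$ (now with $s=1$) in order to push the codomain down to $(<\ve_m)$. These two successive colorings can be merged into a $3$-class partition $L_3(\ve_m)$ by collapsing the $9$ joint outcomes appropriately, so that $3^{k-2}=3$ parts suffice for the base case.

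For the inductive step $k\to k+1$ I would define
$$L_{k+1}(\ve_m)(\gamma_0,\ldots,\gamma_k)=\bigl(L_3(\ve_m)(\gamma_0,\gamma_1,\gamma_2),\,L_k(\ve_m)(\gamma_1,\ldots,\gamma_k)\bigr),$$
producing $3\cdot 3^{k-2}=3^{k-1}$ parts, as required. Homogeneity of $\Gamma$ for $L_{k+1}$ forces $L_3(\ve_m)$ to be constant on every triple appearing as the top three of some $(k+1)$-subset of $\Gamma$, so the subset $\Gamma_{\mathrm{top}}$ obtained by deleting the bottom $k-2$ elements of $\Gamma$ is $L_3(\ve_m)$-homogeneous. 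Since $\max\Gamma_{\mathrm{top}}<\tow_k(\ve_m)$ and $\psn{\max\Gamma_{\mathrm{top}}}\ge k-1=s-1$, I invoke Lemma \ref{lem-L3} with $s=k$ and split on the $L_3$-color:
\begin{itemize}
\item Color $1$: gives $\Theta:\Gamma_{\mathrm{top}}\to(<\ve_m)$; restrict to $\Gamma'\subseteq\Gamma_{\mathrm{top}}$.
\item Color $0$: gives $|\Gamma_{\mathrm{top}}|\le\tow_{k-1}(\underbrace{\tow_a(\ldots(\tow_a}_{m+1\text{ times}}(a+1))\ldots))+1$; adding back the $k-2$ deleted elements yields the required bound $|\Gamma|\le\tow_{k-1}(\cdots)+(k-1)$.
\item Color $2$: gives $\Theta_0:\Gamma_{\mathrm{top}}\setminus\{\min\Gamma_{\mathrm{top}}\}\to(<\tow_{k-1}(\ve_m))$. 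Applying the inductive hypothesis to the image of $\Theta_0$, which has $\max<\tow_{k-1}(\ve_m)$ and inherits $L_k(\ve_m)$-homogeneity from the second coordinate of $L_{k+1}$, yields either a further $\Theta_1$ with codomain $(<\ve_m)$ — whose composition with $\Theta_0$ furnishes the required $\Theta$ on $\Gamma'$ — or a size bound that translates back into the required bound on $|\Gamma|$.
\end{itemize}
The discarded elements in the color-$2$ branch total $(k-2)+1+\binom{k-1}{2}=\binom{k}{2}=\tfrac{k(k-1)}{2}$, matching the statement exactly.

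The main obstacle will be arranging the $L_k$-coordinate of $L_{k+1}$ so that the image $\Theta_0(\cdot)\subseteq(<\tow_{k-1}(\ve_m))$ genuinely inherits $L_k(\ve_m)$-homogeneity — it is not enough for $\Gamma\setminus\{\max\Gamma\}$ to be $L_k$-homogeneous, since $\Theta_0$ sends the $\gamma_i$'s to $\LD$-values, not to the $\gamma_i$ themselves. This compatibility likely forces the $L_k$-coordinate to be defined on suitable derived ordinals (such as successive $\LD(\ve_m;\gamma_i,\gamma_{i+1})$-values) that $\Theta_0$ in effect extracts, so that the inductive descent tracks precisely the ordinals the estimating functions pass through. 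The auxiliary condition $\psn{\max\Theta_0(\cdot)}\ge k-2$ needed by the inductive hypothesis is either automatic or, when it fails, can be disposed of by a direct counting argument from Lemma \ref{lem-Nmk}, since the image is then very small.
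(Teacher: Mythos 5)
Your overall strategy --- induction on $k$ with $L_3(\ve_m)$ from Lemma~\ref{lem-L3} supplying the first coordinate of the refined partition, and the bookkeeping $(k-2)+1+\tfrac{(k-2)(k-1)}{2}=\tfrac{k(k-1)}{2}$ --- is the same as the paper's. But the concrete definition
$$L_{k+1}(\ve_m)(\gamma_0,\ldots,\gamma_k)=\bigl(L_3(\ve_m)(\gamma_0,\gamma_1,\gamma_2),\;L_k(\ve_m)(\gamma_1,\ldots,\gamma_k)\bigr)$$
you offer for the inductive step does not work, and the defect is the one you yourself flag in your final paragraph but then leave unresolved. With this definition, $L_{k+1}$-homogeneity of $\Gamma$ only forces $\Gamma\setminus\{\max\Gamma\}$ to be $L_k(\ve_m)$-homogeneous, which is useless: that set still has $\max<\tow_{k}(\ve_m)$, one tower level too high for the level-$k$ hypothesis, and in any case the level-$k$ hypothesis must be applied in the color-$2$ branch to the \emph{derived} set $\Delta=\{\LD(\ve_m;\gamma_i,\gamma_{i+1})\}$, not to any subset of $\Gamma$. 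Your coloring gives no control over the color pattern of $\Delta$. The correct move (and what the paper does) is to let the second coordinate be $L_k(\ve_m;\delta^0,\ldots,\delta^{k-1})$ with $\delta^i=\LD(\ve_m;\alpha^i,\alpha^{i+1})$, and since this $\delta$-sequence need not be monotone one also needs a catch-all value (the paper sets $W(\alpha)=0$ when the $\delta^i$ are neither strictly decreasing nor strictly increasing). Writing that ``this compatibility likely forces the $L_k$-coordinate to be defined on suitable derived ordinals'' correctly diagnoses the problem but does not constitute a construction; in particular you never address the non-monotone case at all.

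Two smaller points. In the base case you propose two successive applications of Lemma~\ref{lem-L3} and a merging of $9$ outcomes into $3$; this is unnecessary, since for $s=k-1=2$ the color-$2$ branch of Lemma~\ref{lem-L3} already yields values in $(<\tow_{1}(\ve_m))=(<\ve_m)$ (that reading of $\tow_1$ is forced by the proof of Lemma~\ref{lem-Nmk}), so $L_3(\ve_m)$ from Lemma~\ref{lem-L3} is itself the required partition with no merging needed. Your worry about verifying $\psn{\max\Delta}\geq k-2$ before invoking the inductive hypothesis is legitimate --- the paper passes over it too --- but a fallback ``direct counting argument'' must actually produce the bound stated in the lemma in terms of $a=\psn{\max\Gamma}$, and you have not shown how.
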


\begin{proof}
By induction on $k$. 
\par
$k=3:$ See Lemma \ref{lem-L3} above. 
\par
$k \to k+1:$ in the same way as done in \cite{Bigorajska2006}. Assume the result for $k$. We construct the partition for $k+1$. Let $\alpha = (\alpha^{0}, \ldots, \alpha^{k})$ be a $(k+1)$-tuple of ordinals below $\ve_{\omega}$. We begin by putting $G(\alpha)=L_{3}(\ve_{m}; \alpha^{0},\alpha^{1},\alpha^{2})$, so this does not depend on the last $k-2$ coordinates of the sequence $\alpha$. We have: if $\Gamma$ is homogeneous for $G$, then $\Gamma''$ is homogeneous for $L_{3}(\ve_{m})$, where $\Gamma''$ denotes $\Gamma$ without its last $k-2$ elements.
\par
For $\alpha$ as above we let $\delta^{i}=\LD(\ve_{m};\alpha^{i},\alpha^{i+1})$. This $k$-tuple is not necessarily monotonic. Let
$$
W(\alpha)=
\begin{cases}
L_{k}(\ve_{m};\delta^{0},\ldots,\delta^{k-1})& \text{ if } \delta^{0}>\ldots>\delta^{k-1},\\
L_{k}(\ve_{m};\delta^{k-1},\ldots,\delta^{0}) &\text{ if } \delta^{0}<\ldots<\delta^{k-1},\\
0& \text{ in other cases}.
\end{cases}
$$
Finally we put $L_{k+1}(\ve_{m};\alpha)=\left<G(\alpha),W(\alpha)\right>$. We assert that this partition has the right properties. So let $\Gamma=\{\gamma_{0},\ldots,\gamma_{r-1}\}$ written in decreasing order and monochromatic for $L_{k+1}(\ve_{m})$ and let $\Gamma''$ denote $\Gamma$ without its last (\text{i.e. smallest}) $k-2$ elements. Also let $\Gamma'$ denote the set under consideration, \text{i.e.} $\Gamma$ without its last  $\frac{k(k-1)}{2}$ elements.
\par
\textsc{Case 1}. $G$ colours $[\Gamma]^{k+1}$ by 1. Then $L_{3}(\ve_{m})$ colours $[\Gamma'']^{3}$ by 1, so by part \text{1.} of Lemma \ref{lem-L3} there exists a function $\ve_{m}$-ordinal estimating $\Gamma''$. The domain of this function contains the required set $\Gamma'$. 
\par
\textsc{Case 2}. $G$ colours $[\Gamma]^{k+1}$ by 2. Then $L_{3}(\ve_{m})$ colours $[\Gamma'']^{3}$ by 2. Let $\delta_{i}=\LD(\ve_{m};\gamma_{i},\gamma_{i+1})$  for $i< (r-1)-(k-2)$. By the assumption of the case, the set  $\Delta =\{\delta_{0},\ldots,\delta_{r-k}\}$ is written in decreasing order. By homogeneity of $\Gamma$ with respect to $W$, the set $\Delta$ is monochromatic with respect to $L_{k}(\ve_{m})$. We distinguish two subcases.
\par
\emph{Subcase} (a). $|\Delta|\leq \tow_{k-2}(\underbrace{\tow_{a}(\ldots(\tow_{a}}_{m+1 \text{ times}} (a+1))\ldots))+k-2,$
where $a=\psn{\max \Delta}$. Obviously $\psn{\max \Delta} \leq \psn{\max \Gamma''} = \psn{\max \Gamma}$, so 
$$|\Gamma''|\leq  \tow_{k-2}(\underbrace{\tow_{a'}(\ldots(\tow_{a'}}_{m+1 \text{ times}} (a'+1))\ldots))+k-1,$$
where  $a'=\psn{\max \Gamma}$. Thus $|\Gamma|$ is smaller than or equal to
\begin{align*}
 \tow_{k-2}(\underbrace{\tow_{a'}(\ldots(\tow_{a'}}_{m+1 \text{ times}} &(a'+1))\ldots))+k-1+k-2\\
 &\leq \tow_{k-2}(\underbrace{\tow_{a'}(\ldots(\tow_{a'}}_{m+1 \text{ times}} (a'+1))\ldots))+k-1+a'\\
 &\leq \tow_{k-1}(\underbrace{\tow_{a'}(\ldots(\tow_{a'}}_{m+1 \text{ times}} (a'+1))\ldots))+k-1,
 \end{align*}
so the second case in the lemma holds.
\par
\emph{Subcase} (b).  There exists a function $\Theta$ which ordinal $\ve_{m}$-estimates $\Delta$ without its last $\frac{(k-2)(k-1)}{2}$ elements. Consider the function $\gamma_{i}\mapsto\Theta(\delta_{i})$. This function $\ve_{m}$-ordinal estimates $\Gamma$ without its last $\frac{(k-2)(k-1)}{2}+1+k-2=\frac{k(k-1)}{2}$ elements. In order to see that this function indeed has the required property, remind that $l(\gamma_{i})-1\leq m$ and apply Lemma \ref{lem-bases}.
\par
\textsc{Case 3}. $G$ colours $[\Gamma]^{k+1}$ by 0 and so $L_{3}(\ve_{m})$ colours $[\Gamma'']^{3}$ by $0$. We know that $\max \Gamma''=\max \Gamma = \gamma_{0}< \tow_{k}(\ve_{m})$ and that $a=\psn{\gamma_{0}}\geq k-1$. Now part \text{3.} of Lemma \ref{lem-L3} yields
$$
|\Gamma''|\leq\tow_{k-1}(\underbrace{\tow_{a}(\ldots(\tow_{a}}_{m+1 \text{ times}} (a+1))\ldots))+1,$$
and so
$$|\Gamma| \leq \tow_{k-1}(\underbrace{\tow_{a}(\ldots(\tow_{a}}_{m+1 \text{ times}} (a+1))\ldots))+k-1.$$
This completes the induction step and consequently the proof.
\end{proof}

Let $\mu$ be an ordinal and $A$ be a subset of the natural numbers, such that $A \setminus \{\max A\}$ is $\mu$-small. We associate an ordinal $\KS^{A}(\mu, a)$ to $\mu$ and $a \in A$. For a proper definition, we refer to the remark after Theorem 2.6 in \cite{Bigorajska2006}. We need the following lemma.

\begin{lem} \label{lem-KS-psn}
Let $\mu$ be an ordinal, $A \subseteq \Nat$, such that $A \setminus \{\max A\}$ is $\mu$-small and $\psn{\mu}\leq \min A$. Then
$
\psn{\KS^{A}(\mu;a)}\leq a
$ for every $a \in A$.
\end{lem}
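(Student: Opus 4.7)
The plan is to proceed by transfinite induction on $\mu$, invoking the recursive definition of $\KS^{A}$ from \cite{Bigorajska2006}. Schematically, that definition reduces $\KS^{A}(\nu+1; a)$ to a value of $\KS^{A}(\nu;\cdot)$ at a shifted argument of $A$, and reduces $\KS^{A}(\lambda; a)$ for limit $\lambda$ to a value of $\KS^{A}(\lambda[b]; \cdot)$ for $b=\min A$ (the corresponding tail of $A$ being $\lambda[b]$-small by the definition of smallness).

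Before starting the main induction, I would dispatch the following auxiliary claim: \emph{if $\lambda<\vei$ is a limit ordinal and $\psn{\lambda}\leq a$, then $\psn{\lambda[a]}\leq a$.} The verification is a case analysis along the six clauses of the fundamental sequence definition. In clauses (2)--(6) the ordinal $\lambda[a]$ is built from subterms of $\lambda$, whose pseudonorms are already bounded by $\psn{\lambda}$ by Lemma \ref{lem-bases}, together with an insertion of a fundamental-sequence value at $a$; an inner induction on the complexity of $\lambda$ handles these. The delicate clause is (1), namely $\lambda=\ve_{m}$ with $\lambda[a]=\tow_{a}(\ve_{m-1})$: here one computes directly that both the height of $\tow_{a}(\ve_{m-1})$ relative to $\ve_{l-1}$ and the pseudonorms of its normal-form components are bounded by $a$.

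Granted the auxiliary claim, the main induction is straightforward. The base case $\mu=0$ is trivial since $\KS^{A}(0;a)=0$. At the successor step $\mu=\nu+1$, the $\KS$-recursion equates $\KS^{A}(\mu;a)$ with a $\KS^{A}$-value of $\nu$ on the appropriate tail of $A$; since $\psn{\nu}\leq\psn{\mu}\leq \min A$, the induction hypothesis applies and delivers the pseudonorm bound. At the limit step $\mu=\lambda$, the auxiliary claim yields $\psn{\lambda[\min A]}\leq \min A$, so the lemma's hypothesis is preserved on passing to $\lambda[\min A]$ together with the tail of $A$ past $\min A$, and the induction hypothesis then gives $\psn{\KS^{A}(\lambda;a)}\leq a$ for every later $a\in A$.

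The main obstacle is the auxiliary claim, specifically clause (1) for $\lambda=\ve_{m}$. The subtlety is that $\psn{\ve_{m}}=1$ while $\psn{\tow_{a}(\ve_{m-1})}$ is on the order of $a$; the claim uses precisely the slack between the pseudonorm of $\lambda$ and the parameter $a$, rather than aiming to keep the pseudonorm invariant under passage to the fundamental sequence. Once that case analysis is in place, everything else is bookkeeping on the recursive $\KS$-definition.
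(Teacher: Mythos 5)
Your auxiliary claim is false, and this is a genuine gap rather than a bookkeeping issue. Take $\lambda=\ve_{m}$ with $m\geq 0$ and any $a\geq 1$. Then $l(\ve_m)=m+1$, the normal-form base is $\ve_m$, and $\psn{\ve_m}=\max\{h(\ve_m),\psn{1},\psn{1}\}=1\leq a$. But $\ve_m[a]=\tow_a(\ve_{m-1})$ lives at level $l=m$ with normal-form base $\ve_{m-1}$, and its height in the sense of the paper's definition is $h(\tow_a(\ve_{m-1}))=\min\{n : \tow_a(\ve_{m-1})<\tow_n(\ve_{m-1})\}=a+1$, not $a$. Hence $\psn{\ve_m[a]}=a+1>a$. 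Your proposed ``direct computation'' for clause (1) asserts the height is bounded by $a$, which is exactly wrong by one; the slack you invoke between $\psn{\lambda}$ and $a$ does not absorb this, because passing from $\ve_m$ to $\tow_a(\ve_{m-1})$ simultaneously drops the level by one and raises the height to $a+1$. So the invariant ``$\psn{\cdot}\leq a$ is preserved under one application of $[a]$'' simply fails at the $\ve$-numbers.

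The paper anticipates exactly this failure and routes around it. Instead of the false invariant, it proves the weaker statement $S(\alpha)$: if $\psn{\alpha}\leq a$, $\alpha\Rightarrow_a\beta$, \emph{and $\beta$ is nonlimit}, then $\psn{\beta}\leq a$. The restriction to nonlimit $\beta$ is essential: in the problematic case $\alpha=\ve_m$, the intermediate ordinals $\alpha[a],\alpha[a][a],\ldots$ on the $\Rightarrow_a$-chain all have pseudonorm $a+1$, but they are all limit ordinals, so a nonlimit $\beta$ cannot equal any of them. One therefore iterates $[a]$ some $m+3$ times to reach an ordinal $\delta$ with $\psn{\delta}\leq a$, notes that $\delta\Rightarrow_a\beta$ (the chain is forced through $\delta$), and applies the induction hypothesis to $\delta$. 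The lemma then follows because the $\KS^A(\mu;a)$ values are in fact nonlimit and satisfy $\mu\Rightarrow_{a_0}\KS^A(\mu;a_0)\Rightarrow_{a_1}\cdots$. To repair your proof you would need to replace the auxiliary claim by a statement of this form, or otherwise postpone the pseudonorm check past the intermediate limit ordinals at the $\ve$-levels rather than asserting it one step at a time.
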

\begin{proof}
Let $S(\alpha)$ denote the following statement:
$$
(\forall \beta) [(\psn{\alpha} \leq a\, \& \, \alpha \Rightarrow_{a} \beta \, \& \, \beta \text{ is nonlimit}) \Rightarrow (\psn{\beta}\leq a)]
$$
We first prove $(\forall \alpha ) S(\alpha)$ by induction on $\alpha$. In all cases $\beta \leq \alpha$, since $\alpha \Rightarrow_{a} \beta$. If $\beta = \alpha$, then there is nothing to prove, so we will assume $\beta<\alpha$.
\par
The case $\alpha=0$ is trivial.
\par
Assume  $\alpha=\gamma+1$, $\psn{\alpha} \leq a$ and $\alpha \Rightarrow_{a} \beta$, with $\beta$ nonlimit. Then $\psn{\gamma}\leq\psn{\gamma+1}\leq a$ and 
$\alpha[a]=\gamma$, so $\gamma \Rightarrow_{a}\beta$. The induction hypothesis applied to $\gamma$ yields $\psn{\beta}\leq a$.
\par
Assume $\alpha$ is a limit ordinal and the statement holds for all $\gamma<\alpha$. We have two different cases:
\begin{enumerate}
\item $\psn{\alpha[a]}\leq a :$ Since $\alpha[a] \Rightarrow_{a}\beta$ and $\alpha[a]<\alpha$, we can apply the induction hypothesis and obtain $\psn{\beta}\leq a$.
\item $\psn{\alpha[a]} = a+1 :$ This can only happen in the case that $\alpha=\ve_{m}$ for some $m\leq 0$. We have
\begin{align*}
\alpha[a] & \text{ equals $\tow_{a}(\ve_{m})$};\\
\alpha[a][a] & \text{ equals $\alpha[a]$, with the top $\ve_{m}$ replaced by $\tow_{a}(\ve_{m-1})$};\\
\alpha[a][a][a] & \text{ equals $\alpha[a][a]$, with the top $\ve_{m-1}$ replaced by $\tow_{a}(\ve_{m-2})$};\\
\vdots & \\
\alpha\hspace{-0.2em}\underbrace{[a]\ldots[a]}_{m+2 \text{ times}} & \text{ equals $\alpha\hspace{-0.2em}\underbrace{[a]\ldots[a]}_{m+1 \text{ times}} $, with the top $\ve_{0}$ replaced by $\tow_{a}(\omega)$};\\
\alpha\hspace{-0.2em}\underbrace{[a]\ldots[a]}_{m+3 \text{ times}} & \text{ equals $\alpha\hspace{-0.2em}\underbrace{[a]\ldots[a]}_{m+2 \text{ times}} $, with the top $\omega$ replaced by $a$}.
\end{align*}
Put $\delta=\alpha\hspace{-0.2em}\underbrace{[a]\ldots[a]}_{m+3 \text{ times}}$. Since $\beta$ is nonlimit it does not appear in the previous list of ordinals, so $\delta \Rightarrow_{a} \beta$ (remind that $\alpha \Rightarrow_{a} \beta$). Moreover, one can easily verify that $\psn{\delta }\leq a$. The induction hypothesis applied to $\delta$ yields $\psn{\beta}\leq a$. 
\end{enumerate}
Now return to the actual statement we would like to prove. Let $A= \{a_{0}, \ldots, a_{r}\}$. Take any $a_{i}\in A$ and apply $i+1$ times the statement $S(\alpha)$ in the following way: 
\begin{itemize}
\item[1] $\alpha=\mu$, $a=a_{0}$ and $\beta=\KS^{A}(\mu;a_{0})$ yield $\psn{\KS^{A}(\mu;a_{0})}\leq a_{0}$;
\item[2] $\alpha=\KS^{A}(\mu;a_{0})$, $a=a_{1}$ and $\beta=\KS^{A}(\mu;a_{1})$ yield $\psn{\KS^{A}(\mu;a_{1})}\leq a_{1}$; \\
$\vdots$
\item[i+1] $\alpha=\KS^{A}(\mu;a_{i-1})$, $a=a_{i}$ and $\beta=\KS^{A}(\mu;a_{i})$ yield $\psn{\KS^{A}(\mu;a_{i})}\leq a_{i}$. 
\end{itemize}
We have obtained $\psn{\KS^{A}(\mu;a_{i})}\leq a_{i}$.
\end{proof}

\begin{thm} \label{thm-partition}
Let $k\geq 3$ and $m\geq0$. Let $A\subseteq \Nat$ be at most $\tow_{k-2}(\ve_{m})$-large with $\min A \geq k$ and $\min A>3$. Then there exists a partition $R_{k}: [A]^{k}\to 3^{k-2}$ such that every $D\subseteq A$ homogeneous for $R_{k}$ is at most $F(\ve_{m}) + \frac{(k-2)(k-1)}{2}$-large.

\end{thm}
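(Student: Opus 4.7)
The plan is to pull the ordinal partition $L_k(\ve_m)$ from Lemma~\ref{lem-existence} back to $[A]^k$ through a Ketonen--Solovay coding, and then analyse homogeneous subsets via the estimation lemma.

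First, set $\mu := \tow_{k-2}(\ve_m)$. A quick induction on $k$ shows $\psn{\mu} = k-1$, so $\psn{\mu} < k \leq \min A$. Since $A$ is at most $\mu$-large, the set $A \setminus \{\max A\}$ is $\mu$-small, and we may define $G(a) := \KS^A(\mu, a)$ on $A$. The function $G$ is strictly decreasing with range in $[0,\mu]$, and by Lemma~\ref{lem-KS-psn} satisfies $\psn{G(a)} \leq a$ for every $a \in A$.

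Define
\[
R_k(\{a_0, \dots, a_{k-1}\}) := L_k(\ve_m)\bigl(\{G(a_0), \dots, G(a_{k-1})\}\bigr),
\]
a partition of $[A]^k$ into $3^{k-2}$ colours. Given $D \subseteq A$ homogeneous for $R_k$, the image $\Gamma := G[D]$ is homogeneous for $L_k(\ve_m)$ and satisfies $\max \Gamma \leq \mu < \tow_{k-1}(\ve_m)$. Applying Lemma~\ref{lem-existence} to $\Gamma$, I first handle the case where an ordinal-estimating function $\Theta : \Gamma' \to (<\ve_m)$ is obtained, with $\Gamma'$ being $\Gamma$ minus its $\frac{(k-2)(k-1)}{2}$ smallest elements. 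Writing $D'$ for $D$ minus its $\frac{(k-2)(k-1)}{2}$ largest elements, $G$ restricts to a bijection $D' \to \Gamma'$; then $H := \Theta \circ G : D' \to (<\ve_m)$ is strictly decreasing and satisfies $\psn{H(a)} \leq \psn{G(a)} \leq a$. The estimation lemma bounds $D'$ as at most $F(\ve_m)$-large, whence $D$ is at most $F(\ve_m) + \frac{(k-2)(k-1)}{2}$-large.

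The main obstacle I foresee is the second alternative of Lemma~\ref{lem-existence}, which provides only the cardinality bound $|D| \leq \tow_{k-2}(\tow_a(\cdots \tow_a(a+1)\cdots))+k-2$ with $a = \psn{\max \Gamma} \leq \min D$. One must argue that this finite bound on $|D|$, together with $\min D \geq k > 3$, already prevents $D$ from being $(F(\ve_m) + \frac{(k-2)(k-1)}{2}+1)$-large, so the required conclusion holds automatically. A related technicality is the subcase $\psn{\max \Gamma} < k-2$, which lies outside the hypothesis of Lemma~\ref{lem-existence}; here $G(\min D)$ is small enough that the estimation lemma applied directly to $G$ on $D$ already yields a bound well below $F(\ve_m) + \frac{(k-2)(k-1)}{2}$.
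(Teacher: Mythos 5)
Your strategy coincides with the paper's: pull the partition $L_k(\ve_m)$ of Lemma~\ref{lem-existence} back to $[A]^k$ via a Ketonen--Solovay coding, then split according to which alternative of Lemma~\ref{lem-existence} holds. (The paper uses $\KS^A(\tow_{k-1}(\ve_m);\cdot)$ where you use $\KS^A(\tow_{k-2}(\ve_m);\cdot)$; this is an inessential variation, since either choice yields $\max\Gamma<\tow_{k-1}(\ve_m)$ and an applicable pseudonorm bound.) Your Case~1 analysis is correct and matches the paper, modulo the small point---which the paper also flags and delegates to Lemma~5 of \cite{Bigorajska1999}---that the removed elements of $D'$ sit at the wrong end for the estimation lemma's conclusion to transfer directly to $D$.

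The genuine gap is your Case~2, which you label as ``the main obstacle I foresee'' and leave as a promise. This is precisely where the quantitative lemmas of Section~2 earn their keep, and the conclusion is not ``automatic'': one has to convert the cardinality bound on $|\Gamma|=|D|$ into a largeness bound. The paper does it with the chain
\[
|\Gamma|\ \le\ \tow_{k-2}(\underbrace{\tow_{d_0}(\cdots\tow_{d_0}}_{m+1}(d_0+1)\cdots))+k-2\ \le\ \underbrace{\tow_{d_0}(\cdots\tow_{d_0}}_{m+2}(d_0+1)\cdots)\ \le\ h_{\omega^2\cdot 2d_0(m+2)}(d_0)\ <\ h_{\ve_m}(d_0),
\]
where $d_0=\min D$, invoking Lemma~\ref{lem-h>tower} and Lemma~\ref{lem-hepsm}; this forces $D$ to be $\ve_m$-small, hence $F(\ve_m)$-small. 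Without writing out some such chain, the claim that the cardinality bound ``already prevents'' the required largeness is unsubstantiated.

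Your final ``related technicality''---a supposed subcase $\psn{\max\Gamma}<k-2$---is a phantom: the paper observes $\psn{\max\Gamma}=\min D\ge\min A\ge k>k-2$, so the hypothesis of Lemma~\ref{lem-existence} always holds (the equality, not merely $\le$, is a structural property of the Ketonen--Solovay descent, since each step injects the current index as the dominant numeral). Moreover, the repair you propose for that subcase would not work: applying the estimation lemma to $G$ on $D$ directly gives only that $D$ is at most $F(\max\Gamma)$-large, and $\max\Gamma$ can be as large as (roughly) $\tow_{k-2}(\ve_m)$ even when it has small pseudonorm, so $F(\max\Gamma)$ is not controlled by $F(\ve_m)+\frac{(k-2)(k-1)}{2}$.
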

\begin{proof}
Let $A$ satisfy the assumption. Let $L_{k}(\ve_{m})$ be a partition of $[\vei]^{k}$ with the properties described in Lemma \ref{lem-existence}. For $a=(a^{0}, \ldots, a^{k-1})$ we let
$$R_{k}(a)=L_{k}(\ve_{m};\KS^{A}(\tow_{k-1}(\ve_{m});a^{0}), \ldots, \KS^{A}(\tow_{k-1}(\ve_{m});a^{k-1}))$$
and verify that this partition has the desired properties. So let $D$ be a subset of $A$ which is homogeneous for $R_{k}$. Then $\Gamma=\{\KS^{A}(\tow_{k-1}(\ve_{m});d):d \in D\}$ is homogeneous for $L_{k}(\ve_{m})$. Let $\Gamma'$ denote $\Gamma$ without its $ \frac{(k-2)(k-1)}{2}$ smallest elements and let $D'$ be $D$ without its $ \frac{(k-2)(k-1)}{2}$ last (\text{i.e.} greatest) elements.
\par
\textsc{Case 1}. There exists an ordinal estimating function $\Theta: \Gamma' \to (<\ve_{m})$. Define $G:D'\to (\leq \ve_{m})$ by $G(d)=\Theta(\KS^{A}(\tow_{k-1}(\ve_{m});d))$, for every $d \in D'$. Then $G$ is strictly decreasing and for every $d \in D'$,
\begin{align*}
\psn{G(d)}&=\psn{\Theta(\KS^{A}(\tow_{k-1}(\ve_{m});d))}\\
&\leq \psn{\KS^{A}(\tow_{k-1}(\ve_{m});d)}\\
& \leq d.
\end{align*}
The last inequality holds because of Lemma \ref{lem-KS-psn}, since $A\setminus \{\max A\}$ is $\tow_{k-1}(\ve_{m})$-small and  
$$\psn{\tow_{k-1}(\ve_{m})}=k\leq \min A.$$
Then $D'$ is at most $F(\ve_{m})$-large by the estimation lemma, and the result follows. 
(A minor point should be remarked here. The greatest elements of $\Gamma$ correspond to the smallest elements of $D$, thus to obtain the required conclusion we apply Lemma 5 in \cite{Bigorajska1999}.)
\par
\textsc{Case 2}. Since $ \psn{\max \Gamma}=d_{0}=\min D  \geq \min A \geq k$, we have
\begin{align*}
|\Gamma| & \leq\tow_{k-2}(\underbrace{\tow_{d_{0}}(\ldots(\tow_{d_{0}}}_{m+1 \text{ times}} (d_{0}+1))\ldots))+k-2\\
&\leq\underbrace{\tow_{d_{0}}(\ldots(\tow_{d_{0}}}_{m+2 \text{ times}} (d_{0}+1))\ldots)\\
& \leq h_{\omega^{2}\cdot2d_{0}(m+2)}(d_{0})\\
& < h_{\ve_{m}}(d_{0}),
\end{align*}
where the last inequality is a result from Lemma \ref{lem-hepsm}. Since $|D|=|\Gamma|$, $D$ is $\ve_{m}$-small, and so $F(\ve_{m})$-small, for $m\geq 0$.
\end{proof}

\begin{cor} \label{cor-main}
Let $m,k \in \Nat$ and $ \alpha=F(\ve_{m}) + \frac{(k-2)(k-1)}{2}+1$. Let $A$ be such that $A \to (\alpha)^{k}_{3^{k-2}}$, $3\leq k\leq \min(A)$ and $3<\min A$. Then $A$ is $\tow_{k-2}(\ve_{m})$-large.
\end{cor}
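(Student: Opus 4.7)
The plan is to derive the corollary as the contrapositive of Theorem~\ref{thm-partition}. Assume for contradiction that $A$ is not $\tow_{k-2}(\ve_{m})$-large. Then $A$ is $\tow_{k-2}(\ve_{m})$-small, hence in particular at most $\tow_{k-2}(\ve_{m})$-large in the sense used by that theorem. The side conditions $3 \leq k \leq \min A$ and $3 < \min A$ stated in the corollary are exactly the hypotheses $\min A \geq k$ and $\min A > 3$ required by Theorem~\ref{thm-partition}, so that theorem applies verbatim.

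Applying Theorem~\ref{thm-partition} therefore furnishes a partition $R_{k} : [A]^{k} \to 3^{k-2}$ with the property that every subset $D \subseteq A$ homogeneous for $R_{k}$ is at most $F(\ve_{m}) + \tfrac{(k-2)(k-1)}{2}$-large. On the other hand, the hypothesis $A \to (\alpha)^{k}_{3^{k-2}}$, applied to this specific $R_{k}$, yields a homogeneous $D \subseteq A$ which is $\alpha$-large, i.e.\ $\bigl(F(\ve_{m}) + \tfrac{(k-2)(k-1)}{2} + 1\bigr)$-large. This is incompatible with the bound just obtained, and the contradiction forces $A$ to be $\tow_{k-2}(\ve_{m})$-large.

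The statement is essentially a bookkeeping rephrasing of Theorem~\ref{thm-partition}, so no genuine obstacle is expected: the trailing $+1$ in the definition of $\alpha$ is precisely calibrated to turn the ``at most \ldots -large'' conclusion of the theorem into the strict inequality needed to refute the partition relation $A \to (\alpha)^{k}_{3^{k-2}}$. The only minor point to check is that ``not $\beta$-large'' implies ``at most $\beta$-large'' in the Ketonen--Solovay sense used throughout the paper, which is immediate from the definitions recalled in Section~2.
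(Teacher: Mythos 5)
Your argument is correct and delivers the corollary, but it actually diverges from the paper's own proof in one structural step worth knowing about. The paper does not apply Theorem~\ref{thm-partition} to $A$ directly. Instead, assuming $A$ is $\tow_{k-2}(\ve_{m})$-small, it first invokes a claim (proved in \cite{Bigorajska2006} by induction on $\alpha$) that every $\alpha$-small set $B$ can be augmented by a $C$ with $\max B<\min C$ so that $B\cup C$ is \emph{exactly} $\alpha$-large. It then applies Theorem~\ref{thm-partition} to $A\cup C$, obtains a partition $L$ of $[A\cup C]^{k}$ without any $\alpha$-large monochromatic subset, and finally restricts $L$ to $[A]^{k}$ to contradict $A\to(\alpha)^{k}_{3^{k-2}}$. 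You instead argue directly with $A$, leaning on the implication ``$A$ is $\tow_{k-2}(\ve_{m})$-small $\Rightarrow$ $A$ is at most $\tow_{k-2}(\ve_{m})$-large.'' You call this implication ``immediate from the definitions recalled in Section~2,'' but the paper never defines ``at most $\alpha$-large'' — it only points to \cite{Bigorajska2006}, where that notion is phrased as a condition on $A\setminus\{\max A\}$ (namely, that $A\setminus\{\max A\}$ be $\alpha$-small). The implication is indeed true once that definition is unpacked (if $h_{\alpha}^{A}(\min A)$ diverges, then so does $h_{\alpha}^{A\setminus\{\max A\}}(\min A)$), so your proof is sound and somewhat more economical than the paper's, avoiding the auxiliary claim about exactly-large extensions. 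You should, however, actually state and justify the inference rather than wave it off, since it is precisely the step the authors chose to sidestep by extending $A$ to an exactly $\tow_{k-2}(\ve_{m})$-large set before applying the theorem.
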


\begin{proof} The following claim is proven in \cite{Bigorajska2006} by induction on $\alpha$.
\par
\textsc{Claim}: for every $B$, if $B$ is $\alpha$-small, then there exists $C$ such that $\max B < \min C$ and $B\cup C$ is exactly $\alpha$-large.  \par
Granted the claim we argue as follows. Assume that $A$ is $\tow_{k-2}(\ve_{m})$-small. Let $C$ be as in the claim, so $\min A < \max C$ and $A\cup C$ is exactly $\tow_{k-2}(\ve_{m})$-large. By Theorem \ref{thm-partition} there exists a partition $L$ of $[A \cup C]^{k}$ into $3^{k-2}$ parts without an  $F(\ve_{m}) + \frac{(k-2)(k-1)}{2}+1$-large monochromatic set. We restrict $L$ to $[A]^{k}$ and see that this restriction does not admit an $F(\ve_{m}) + \frac{(k-2)(k-1)}{2}+1$-large monochromatic set.
\end{proof}

This last corollary will be useful to prove a miniaturisation of the infinite Ramsey Theorem.

\bibliographystyle{alpha}
\bibliography{Database-General}

\end{document}